\newcommand{\G}{\mathcal{G}}
\newcommand{\Z}{\mathbb{Z}}
\newcommand{\R}{\mathbb{R}}
\newcommand{\N}{\mathbb{N}}
\newcommand{\seq}[1]{\langle #1 \rangle}
\newtheorem{thm}{Theorem}[section]
\newtheorem{cor}[thm]{Corollary}
\newtheorem{rem}[thm]{Remark}
\newtheorem{que}[thm]{Question}
\newtheorem{lem}[thm]{Lemma}
\newtheorem{defn}[thm]{Definition}
\DeclareMathOperator{\dist}{dist}
\begin{document}
%

\title{Mixing completely scrambled system exists}
\author{Jan P. Boro\'nski}
\address[J. P. Boro\'nski]{AGH University of Science and Technology, Faculty of Applied
	Mathematics, al.
	Mickiewicza 30, 30-059 Krak\'ow, Poland
	-- and --
	National Supercomputing Centre IT4Innovations, Division of the University of Ostrava,
	Institute for Research and Applications of Fuzzy Modeling,
	30. dubna 22, 70103 Ostrava,
	Czech Republic}
\email{jan.boronski@osu.cz}
\author{Ji\v{r}\'{\i} Kupka}
\address[J. Kupka]{National Supercomputing Centre IT4Innovations, Division of the University of Ostrava,
	Institute for Research and Applications of Fuzzy Modeling,
	30. dubna 22, 70103 Ostrava,
	Czech Republic}
	\email{jiri.kupka@osu.cz}
\author{Piotr Oprocha}
\address[P. Oprocha]{AGH University of Science and Technology, Faculty of Applied
	Mathematics, al.
	Mickiewicza 30, 30-059 Krak\'ow, Poland
	-- and --
	National Supercomputing Centre IT4Innovations, Division of the University of Ostrava,
	Institute for Research and Applications of Fuzzy Modeling,
	30. dubna 22, 70103 Ostrava,
	Czech Republic}
\email{oprocha@agh.edu.pl}

\begin{abstract}
We prove that there exists a topologically mixing homeomorphism which is completely scrambled. We also prove that for any integer $n\geq 1$
there is a continuum of topological dimension $n$ supporting a transitive completely scrambled homeomorphism, and $n$-dimensional compactum supporting a weakly mixing completely scrambled homeomorphism. This solves a 15 year old open problem.
\end{abstract}
\maketitle

\section{Introduction}
The notion of Li-Yorke pair was introduced for the first time by Li and Yorke in their seminal paper  \cite{LY} from 1975.
Recall that $x,y\in X$ form a \textit{Li-Yorke pair} for a continuous map $T\colon X\to X$ on a compact metric space $(X,d)$ if
$$
\liminf_{n\to \infty} d(T^n(x),T^n(y))=0\quad \textrm{and} \quad \limsup_{n\to \infty} d(T^n(x),T^n(y))>0
$$
and a set $D\subset X$ is \textit{scrambled} when each pair of its distinct points is Li-Yorke. A natural question which arose immediatelly was about possible size of scrambled set.
While it had been known since the beginning that scrambled sets can be uncountable, it was soon after realized that when $X=[0,1]$ a scrambled set is never residual,
\cite{Gedeon} but it can have full Lebesgue measure \cite{Bruckner} (see also \cite{Mai}; for more detailed discussion on the size of scrambled sets see survey paper \cite{Snoha} and references therein).
Clearly the most extreme situation is when the whole $X$ is a scrambled set. In such a case we say that $(X,T)$ is \textit{completely scrambled}.
A question of whether a completely scrambled system can exist for a compact $X$ was open for a while, and a positive answer was brought by the paper \cite{HY}
by Huang and Ye.  They proved that completely scrambled sets exist on the Cantor set, and then used a nice extension method to show that for every $n\geq 1$ there exists
a compact set (in fact, a continuum) $X$ together with a completely scrambled homeomorphism $T\colon X\to X$.
Unfortunately their examples are not transitive, hence they were wondering if these examples can be improved to obtain a transitive map.
It has also been known that a completely scrambled transitive, or even completely scrambled and weakly mixing homeomorphism exist (see \cite{AAB} or its extension in \cite{FO})
for a closed subset $X$ in $[0,1]^\N$ of unknown dimension (most likely infinite). Therefore the following question has remained open since publication of \cite{HY} in 2001.
\begin{que}\label{que:1}
Does there exist for every $n\geq 1$ a continuum $X$ of dimension $n$ supporting a completely scrambled transitive homeomorphims?
\end{que}

In Section~\ref{sec:que} we will show that the answer to the above question is positive. We will also prove the following theorem which extends the results of \cite{FO}.
\begin{thm}\label{main}
There exists a mixing homeomorphism on the Cantor set which is completely scrambled.
\end{thm}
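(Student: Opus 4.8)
The plan is to realize the desired system as a two-sided subshift $(X,\sigma)$ with $X\subseteq A^{\Z}$ over a finite alphabet $A\ni 0$, carrying the fixed point $\bar 0=\ldots 000\ldots$, and to read the Li--Yorke condition combinatorially. With the metric $d(x,y)=2^{-\min\{|i|\,:\,x_i\neq y_i\}}$, a pair $x\neq y$ is proximal exactly when $x$ and $y$ share arbitrarily long common central blocks (for every $M$ there is an $n$ with $x_{[n-M,n+M]}=y_{[n-M,n+M]}$), and it fails to be asymptotic (has $\limsup_n d(\sigma^n x,\sigma^n y)>0$) exactly when the disagreement set $\{j:x_j\neq y_j\}$ is unbounded above. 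So I would build $X$ so that (i) every pair shares arbitrarily long common central blocks, (ii) no two distinct points agree on a right half-line, and (iii) $(X,\sigma)$ is topologically mixing. Since every pair is then proximal, $(X,\sigma)$ is automatically a proximal cascade whose unique minimal set is the fixed point $\bar 0$, so the pairs $(x,\bar 0)$ are covered by the same two properties.

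The engine for proximality would be a hierarchy of long $0$-runs. I would design the language of $X$ self-similarly so that at each scale $k$ every admissible point contains runs $0^{L_k}$ with $L_k\to\infty$ that are so long relative to their gaps that, inside any sufficiently long window, a single $0$-run occupies more than half of it. Then for any two points $x,y$ one finds, at each scale, positions where the long $0$-run of $x$ and the long $0$-run of $y$ are forced to overlap in a central segment whose length tends to infinity; recentering there yields the common central blocks of (i). The point of arguing by an overlap/pigeonhole estimate is that the alignment time depends on the pair $(x,y)$: there is deliberately no single sequence of times at which all points synchronise.

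Mixing and the absence of asymptotic pairs I would then extract from the same template. For mixing I would prove a gluing property directly: any two admissible words $u,v$ can be realized as $u\,0^{g}\,v$ (or $u\,w\,v$ with $w$ an admissible connector of length $g$) for every sufficiently large $g$, using that arbitrarily long $0$-blocks are admissible and that admissibility is governed by local rules compatible with cutting and pasting along $0$-blocks. To get (ii) I would arrange the hierarchy so that any difference between two distinct points is reproduced by the self-similar structure at arbitrarily large positions, forcing the disagreement set to be unbounded above.

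The hard part, and the whole point of the theorem, is reconciling (iii) with total proximality, since these pull in opposite directions: proximality of every pair is a strong synchronisation demand, while mixing forbids synchronisation that is uniform. Indeed topological mixing is incompatible with uniform rigidity --- if $\sigma^{n_k}\to\mathrm{id}$ uniformly then disjoint clopen sets $U,V$ satisfy $\sigma^{n_k}U\cap V=\emptyset$ for infinitely many $k$ --- so the closeness times cannot be chosen uniformly in the pair, which is exactly why I route proximality through the pair-dependent overlap argument. Equally, the synchronising $0$-run structure must not organise into a nontrivial equicontinuous (odometer) factor, since a mixing system admits no such factor; this rules out rigidly periodic run positions and forces the gap lengths to stay flexible (every large $g$ admissible), the very feature needed for the gluing proof of mixing. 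Making the scales $L_k$, the gaps, and the self-similar rules mesh so that (i)--(iii) hold at once is the main obstacle, and I expect the bulk of the work to lie there.
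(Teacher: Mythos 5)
Your plan has a fatal obstruction that no tuning of the hierarchy can remove: an infinite two-sided subshift always contains a nontrivial forward asymptotic pair, so your condition (ii) is unachievable in principle. Subshifts are expansive, and by a classical theorem of Schwartzman (see Gottschalk--Hedlund, \emph{Topological Dynamics}, Theorem 10.36) every expansive homeomorphism of an infinite compact metric space admits distinct positively asymptotic points. One can also see it combinatorially in your setting: if the word complexity $|L_n(X)|$ were eventually constant then $X$ would be finite, so for every $n$ there is a left-special word $w_n$ of length $n$, i.e.\ $aw_n,bw_n\in L(X)$ with $a\neq b$; pick $x^{(n)},y^{(n)}\in X$ with $x^{(n)}_{[1,n]}=y^{(n)}_{[1,n]}=w_n$ and $x^{(n)}_0\neq y^{(n)}_0$, and pass to a limit to obtain $x\neq y$ agreeing on all coordinates $i\geq 1$. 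Their disagreement set is bounded above, so (by your own correct dictionary) the pair is asymptotic, and $(X,\sigma)$ is not completely scrambled. Since a mixing completely scrambled system on the Cantor set is necessarily infinite, the very first sentence of your proposal --- ``realize the desired system as a two-sided subshift over a finite alphabet'' --- already forecloses the theorem; the locality of admissibility rules, which is what makes a subshift a subshift, is exactly the mechanism that manufactures asymptotic pairs.

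The paper escapes this by constructing a zero-dimensional but \emph{non-expansive} system: an inverse limit of bidirectional graph covers in the style of Shimomura and Akin--Glasner--Weiss. At level $n$ there are cycles $c_{n,1},\ldots,c_{n,n}$ through a base vertex, and the covering maps $\phi_n$ play the role of your self-similar rules, but the ``alphabet'' $V_n$ grows with $n$ and the refinement never stabilizes, which is what kills expansivity. Several of your instincts do survive in this setting: mixing is proved exactly by your gluing idea, since $\phi_n(c_{n+1,1})$ contains $c_{n,1}+k\,e_{n,0}+c_{n,1}$ for \emph{all} gap lengths $k=0,\ldots,k_n$, propagated inductively through the levels (Lemma~\ref{lem:mixing}); proximality follows because the fixed point $p$ lies in every $\omega$-limit set (Lemma~\ref{lem:prox}), with the closeness times pair-dependent, as you anticipated. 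But the absence of asymptotic pairs --- which your sketch delegates entirely to the impossible condition (ii) --- is where the real work lies: the paper introduces a degree invariant $\deg(x)$ constant along orbits (Lemma~\ref{lem:def:const}) and runs a delicate analysis of how cycle occurrences at consecutive levels can align (Corollary~\ref{Cor:asymptotic}, Lemmas~\ref{lem:different:deg} and~\ref{lem:deg:xeqy}). To salvage your picture you would have to let the block hierarchy act at infinitely many scales simultaneously, i.e.\ replace the subshift by an inverse limit of the approximating combinatorial structures --- which is precisely the paper's construction.
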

Note that the above result cannot be obtained by a direct modification of the approach in \cite{FO}, because uniformly rigid systems (a core property in the construction in \cite{FO}) are never mixing. In our approach we will use a technique of Shimomura \cite{Shim} which is based on the construction of an inverse limit of graph covers introduced by Akin, Glasner and Weiss in \cite{AGW}.
While in many aspects our construction in the proof of Theorem~\ref{main} is inspired by \cite{Shim}, our proofs are different and somehow shorter.

Using Theorem~\ref{main} and a class of examples discovered by Glasner and Maon \cite{Maon}, we prove the following:
\begin{thm}\label{main:2}
For every $n\geq 1$ there exists a compact set $Y$ of topological dimension $n$ supporting completely scrambled and weakly mixing homeomorphism.
\end{thm}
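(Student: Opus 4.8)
The plan is to realize $Y$ as a product of the zero-dimensional system supplied by Theorem~\ref{main} with a carefully chosen $n$-dimensional weakly mixing system coming from the Glasner--Maon construction, and then to transfer complete scrambling from the factors to the whole product. Write $(C,T)$ for the mixing, completely scrambled homeomorphism of the Cantor set given by Theorem~\ref{main}, and let $(Z,S)$ be the Glasner--Maon example: a weakly mixing homeomorphism of a compact metric space $Z$ with $\dim Z=n$. The candidate is $Y=C\times Z$ with $F=T\times S$. I would first record the two cheap properties. Since $C$ is zero-dimensional and compact, monotonicity of dimension gives $\dim(C\times Z)\ge\dim Z=n$, while the product inequality gives $\dim(C\times Z)\le\dim C+\dim Z=n$, so $Y$ has the required dimension. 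And since $T$ is mixing (in particular weakly mixing) and $S$ is weakly mixing, the product $F=T\times S$ is weakly mixing: rearranging coordinates, $F\times F\cong(T\times T)\times(S\times S)$ is transitive, being the product of the weakly mixing $T\times T$ with the transitive $S\times S$.

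The whole content is therefore in showing that $F$ is completely scrambled, i.e. that every pair of distinct points $(c,z),(c',z')\in Y$ is a Li-Yorke pair for $F$. Using the product metric $D=d_C+d_Z$, this amounts to producing, for the given pair, (i) a sequence $n_k\to\infty$ along which $d_C(T^{n_k}c,T^{n_k}c')\to 0$ and $d_Z(S^{n_k}z,S^{n_k}z')\to 0$ simultaneously, so that $\liminf_n D(F^n(c,z),F^n(c',z'))=0$, and (ii) a positive lower bound on $\limsup_n D(F^n(c,z),F^n(c',z'))$. Part (ii) is the easier half: if $c\neq c'$ then $\limsup_n d_C(T^nc,T^nc')>0$ already because $(C,T)$ is completely scrambled, while if $c=c'$ and $z\neq z'$ one uses that the Glasner--Maon system is uniformly rigid, so there is $m_j$ with $S^{m_j}\to\mathrm{id}$ uniformly and hence $d_Z(S^{m_j}z,S^{m_j}z')\to d_Z(z,z')>0$; in either case $\limsup_n D>0$.

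The main obstacle is part (i): the synchronization of the proximal return times in the two factors. For a pair with $c=c'$ it suffices that $(Z,S)$ be proximal, that is, $\liminf_n d_Z(S^nz,S^nz')=0$ for all $z,z'$, which is the additional feature I would extract from the Glasner--Maon examples; for a pair with $c\neq c'$ one must find a common time at which the $C$-coordinates and the $Z$-coordinates are close together. This is exactly the step where the full strength of Theorem~\ref{main} — mixing, not merely weak mixing — is expected to be used: mixing of $T$ should force the set of $\eps$-proximal times $\{n:d_C(T^nc,T^nc')<\eps\}$ to be thick (to contain arbitrarily long blocks), and intersecting a thick set with the corresponding $\eps$-proximal time set of $(z,z')$ in the proximal system $(Z,S)$ yields the required simultaneous instants. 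I would then run an $\eps$-by-$\eps$ diagonal argument, choosing $\eps_k\to 0$ and extracting $n_k$ lying in the intersection of the two factors' $\eps_k$-proximal time sets, which produces the sequence demanded in (i).

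Assembling the pieces, (i) and (ii) show that every pair of distinct points of $Y$ is Li-Yorke, so $(Y,F)$ is completely scrambled; together with the dimension and weak-mixing computations this yields a weakly mixing completely scrambled homeomorphism on a compact space of topological dimension exactly $n$. The delicate point to verify is that the Glasner--Maon examples can be taken simultaneously $n$-dimensional, weakly mixing, uniformly rigid, and proximal, with enough control on the distribution of proximal times for the intersection argument to go through. Should the plain product fail to provide this control, I would instead replace $C\times Z$ by a skew product over $(C,T)$ with fibers modeled on $Z$, which preserves the dimension and weak-mixing bookkeeping while giving the extra freedom needed to force the simultaneous closeness in (i).
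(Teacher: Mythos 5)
There is a genuine gap, and it sits exactly at the point you flagged as delicate: the ``additional feature'' you hope to extract from the Glasner--Maon examples --- proximality of $(Z,S)$ --- is unavailable in principle, so the plain product $C\times Z$ cannot work. The Glasner--Maon systems of \cite[Proposition~6.5]{Maon} are \emph{minimal}, and a nontrivial minimal system is never proximal: a proximal system has a fixed point as its unique minimal set. Hence $Z$ contains pairs $z\neq z'$ that are not proximal under $S$, and then the pair $\bigl((c,z),(c,z')\bigr)$ is not proximal for $F=T\times S$, so $(Y,F)$ is not completely scrambled no matter how cleverly you synchronize the remaining pairs. Moreover, if you instead drop minimality and demand that $(Z,S)$ be simultaneously proximal, uniformly rigid, weakly mixing, without nontrivial asymptotic pairs, and of dimension $n$, you are demanding precisely a completely scrambled weakly mixing system of dimension $n$ --- the statement being proved --- so the argument becomes circular. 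Two smaller problems: your thickness claim for the $\varepsilon$-proximal times of $(c,c')$ does not come from mixing but from proximality toward the \emph{fixed} point $p$ (once both orbits are $\delta$-close to $p$ they remain $\varepsilon$-close to it for a long stretch, since $p$ is fixed); and even granting thickness in both factors, two thick sets can be disjoint, so the intersection step in (i) is not secured. Finally, Glasner--Maon lives on $\mathbb{T}^n$ with $n\geq 2$; your proposal never addresses $n=1$, where no minimal weakly mixing homeomorphism of the circle exists.

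The idea your proposal is missing, and which the paper uses, is a \emph{quotient} rather than a product: form $(X\times Z, T\times R)$ with $(Z,R)$ the minimal, weakly mixing, uniformly rigid Glasner--Maon system on $\mathbb{T}^n$, and collapse the fiber $M=\{p\}\times Z$ over the unique fixed point $p$ of $(X,T)$ to a single point $q$, obtaining a factor $(Y,S)$. Then $\{q\}$ is the unique minimal set of $(Y,S)$, so $(Y,S)$ is proximal --- this settles your part (i) for \emph{all} pairs simultaneously, with no synchronization of return times needed, because every orbit accumulates on the collapsed point. Absence of nontrivial asymptotic pairs (your part (ii)) is verified by comparing orbits outside a neighborhood of $M$, using that $(X,T)$ has no asymptotic pairs and that the first coordinate of any point not lying over $p$ leaves a neighborhood of $p$ infinitely often; weak mixing passes to the factor $(Y,S)$, and the collapse does not change the dimension $n$. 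For $n=1$ the paper replaces the torus by Handel's minimal homeomorphism of the pseudo-circle, which is weakly mixing and uniformly rigid \cite{Handel,BCO}. Your fallback suggestion of a skew product does not by itself repair the circularity: the obstruction is that all pairs within a single fiber over a point of $C$ must be proximal, and some identification such as the collapse of $M$ is what removes it.
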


\section{Preliminaries}

A \textit{dynamical system} $(X,T)$ is a pair consisting of compact metric space $(X,d)$ and a continuous map $T\colon X\to X$.
By \textit{dimension}, we always mean topological dimension, and \textit{Cantor set} is any 0-dimensional compact metric space without isolated points, that is any space homeomorphic with the standard Cantor set in the real line. A \textit{continuum} is a compact connected metric space.

\subsection{Transitivity and chaos}

For each $x\in X$ we define its (positive) \textit{orbit} putting $O^+_T(x)=\{T^n(x): n\geq 0\}$ and its \textit{$\omega$-limit set}
by $\omega_T(x)=\bigcap_{n\geq 0}\overline{O^+_T(T^n(x))}$. A set $Z\subset X$ is \textit{invariant} if $T(Z)\subset Z$
and $Z$ is a subsystem if it is closed and invariant, which in other words means that $(Z,T)$ is a dynamical system.
A point $x\in X$ is \textit{recurrent} if $x\in \omega_T(x)$ and \textit{minimal} if it is recurrent and $\omega_T(x)=\omega_T(y)$
for every $y\in \omega_T(x)$. An invariant set $Z$ is \textit{minimal} if $Z=\omega_T(x)$ for some $x\in X$.

A pair of points $x,y\in X$ is:
\begin{enumerate}[(i)]
	\item  \textit{proximal} if $\liminf_{n\to \infty} d(T^n(x),T^n(y))=0$;
	\item \textit{asymptotic} if $\limsup_{n\to \infty} d(T^n(x),T^n(y))=0$;
\item \textit{Li-Yorke} if it is proximal but not asymptotic.
\end{enumerate}
A set $D\subset X$ is \textit{scrambled} if any two distinct $x,y\in D$ are a Li-Yorke pair. If there exists an uncountable scrambled set then we say that $(X,T)$ is \textit{chaotic} in the sense of Li and Yorke.
If the whole $X$ is a scrambled set then $(X,T)$ is \textit{completely scrambled}.

A dynamical system $(X,T)$ is:
\begin{enumerate}[(i)]
\item \textit{transitive} if for any nonempty open sets $U,V$ there is $n>0$ such that $T^n(U)\cap V \neq \emptyset$;
\item \textit{weakly mixing} if the product
system $(X\times X, T\times T)$ is transitive;
\item \textit{mixing} if for any two open sets $U,V$ there is $N$ such that $T^n(U)\cap V \neq \emptyset$ for all $n\geq N$.
\item \textit{uniformly rigid} if $\liminf_{n\to \infty}\rho(T^n,\textrm{id}_X)=0$, where $\rho$ is the metric of uniform convergence,
that is $\rho(S,R)=\sup_{x\in X} d(S(x),R(x))$.
\end{enumerate}

Let $T\colon X\to X$ be a homeomorphism and let $Z$ be a compact metric space obtained form $X\times [0,1]$ by identifying pairs
$(x,1)$ with $(T(x),0)$. The \textit{suspension flow over $(X,T)$} is the flow $\seq{\Phi_t}_{t\in \R}$ defined on $Z$ by the formula
$$
\Phi_t(s,x)=(\{s+t\}, T^{\lfloor s+t\rfloor}(x))
$$
where $\lfloor r\rfloor$ and $\{r\}$ denote the integer and fractional parts of a real number $r\in \R$, respectively.
Clearly, if $(X,T)$ is transitive then $Y$ is a continuum.

A continuous map $\pi\colon X \to Y$ is a \emph{factor} map between dynamical systems $(X,T)$ and $(Y,S)$ if $\pi$ is surjective
and $\pi \colon T = S\colon \pi$.

\subsection{Graph covers}
By a \textit{graph} we mean a pair $G=(V,E)$ of finite sets,
where $E\subset V\times V$. By a \textit{vertex} in $G$ we mean any element of $V$,
and elements of $E$ are called the \textit{edges}.
The graphs we consider are always \textit{edge surjective}, i.e.
for every $v\in V$ there are $u,w\in V$ such that $(u,v), (v,w)\in E$. In other words, each
vertex has an incoming and outgoing edge.
A map $\phi \colon V_1\to V_2$ is a \textit{homomorphism} between graphs $(V_1,E_1)$, $(V_2,E_2)$
if for every $(u,v)\in E_1$ we have $(\phi(u),\phi(v))\in E_2$; to emphasize that $\phi$ is a graph homomorphism
we write $\phi \colon (V_1,E_1)\to (V_2,E_2)$. For simplicity, for edge $e=(u,v)\in E_1$ we use notation $\phi(e)=(\phi(u),\phi(v))$. This extends onto paths
$e_1\ldots e_n$ on $(V_1,E_1)$ by the standard rule $\phi(e_1\ldots e_n)=\phi(e_1)\ldots \phi(e_n)$.
A homomorphism $\phi$ is \textit{bidirectional} if $(u,v),(u,v')\in E_1$ implies $\phi(v)=\phi(v')$ and
$(w,u),(w',u)\in E_1$ implies $\phi(w)=\phi(w')$.
If $\phi$ is a bidirectional map between edge-surjective graps then we call it \textit{bd-cover}. Let $\G=\seq{\phi_i}_{i=0}^\infty$ denote a sequence of bd-covers $\phi_i \colon (V_{i+1},E_{i+1})\to (V_i,E_i)$, and let
$$
V_\G=\varprojlim(V_i,\phi_i)=\{ x\in \Pi_{i=0}^\infty V_i : \phi_i(x_{i+1})=x_i \text{ for all }i\geq 0\}
$$
be the inverse limit defined by $\G$.
As usual, let $\phi_{m,n}=\phi_n\circ \phi_{n+1}\circ \ldots \circ \phi_{m-1}$ and denote the projection from $V_\G$ onto $V_n$ by $\phi_{\infty,n}$.
Set
$$
E_\G=\{(x,y)\in V_\G\times V_\G : (x_i,y_i)\in E_i \text{ for each }i=1,2,\dots \}
,$$
As usual, $V_i$ is endowed with discrete topology and $\mathbb{X}=\prod_{i=0}^\infty V_i$
is endowed with product topology. This topology is compatible with the metric given by $d(x,y)=0$ when $x=y$ and
$d(x,y)=2^{-k}$
when $x\neq y$ and $k=\min \{i : x_i\neq y_i\}$. Then $V_\G$ is a closed subset of $\mathbb{X}$ and we consider it with topology (and metric)
induced from $\mathbb{X}$.

By a cycle on graph $G$ we mean any path starting and ending in the same vertex.
If $c_1,\ldots, c_n$ are cycles starting in the same vertex $v$ then by $a_1 c_1+\ldots +a_n c_n$
we denote the cycle at $v$ obtained by passing $a_1$ times cycle $c_1$ then $a_2$ times cycle $c_2$, etc.
Length of a path $\eta$ (i.e. the number of edges on it) is denoted $|\eta|$. By $V(\eta)$ we denote the set of vertexes
on path $\eta$. The following important fact is given in \cite[Lemma 3.5]{Shim2}, with injectivity of $E_\G$ relying on the fact that the sequence is bidirectional.
\begin{lem}\label{lem:Tg}
Let $\G=\seq{\phi_i}$ be a sequence of bd-covers $\phi_i \colon (V_{i+1},E_{i+1})\to (V_i,E_i)$.
Then $V_\G$ is a zero-dimensional compact metric space and the relation $E_\G$ defines a homeomorphism.
\end{lem}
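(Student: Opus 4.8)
The statement has two independent parts, and I would treat them separately. The claim that $V_\G$ is a zero-dimensional compact metric space is essentially formal: the ambient space $\mathbb{X}=\prod_{i\ge 0}V_i$ is a countable product of finite discrete spaces, so with the given ultrametric it is compact, metrizable, and has a clopen basis of cylinder sets, hence is zero-dimensional; and $V_\G$ is the intersection $\bigcap_{i\ge 0}\{x : \phi_i(x_{i+1})=x_i\}$ of clopen conditions, so it is a closed (hence compact) subspace of $\mathbb{X}$ and inherits zero-dimensionality. Nonemptiness follows from the standard fact that an inverse limit of nonempty finite sets over $\N$ is nonempty (a K\"onig's lemma / finite-intersection argument). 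So the real content is that the relation $E_\G$ is the graph of a homeomorphism, and this is where I would spend all the effort.

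The plan is to produce an explicit map $T$ whose graph is $E_\G$. The key observation, and the only place bidirectionality enters, is this: fix $u\in V_{i+1}$; since the graph is edge surjective, $u$ has at least one outgoing edge $(u,w)\in E_{i+1}$, and bidirectionality says that every outgoing edge of $u$ has the same value $\phi_i(w)$. I would therefore define $\psi_i\colon V_{i+1}\to V_i$ by $\psi_i(u)=\phi_i(w)$ for any out-neighbor $w$ of $u$; this is well defined. Then I set $T(x)_i=\psi_i(x_{i+1})$ and check that $(x,T(x))\in E_\G$. Membership $(x_i,T(x)_i)\in E_i$ is immediate from applying the homomorphism $\phi_i$ to the edge $(x_{i+1},w)$, while the coherence condition $\phi_i\big(T(x)_{i+1}\big)=T(x)_i$ needed for $T(x)\in V_\G$ follows once one observes that $\phi_{i+1}(w')$ is again an out-neighbor of $x_{i+1}$ whenever $w'$ is an out-neighbor of $x_{i+2}$. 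Uniqueness of the successor is the outgoing half of bidirectionality applied coordinatewise: if $(x,y),(x,y')\in E_\G$ then $y_i=\phi_i(y_{i+1})=\phi_i(y'_{i+1})=y'_i$ for every $i$. Since $T(x)_i$ depends only on the single coordinate $x_{i+1}$, the map $T$ is continuous (indeed nonexpanding up to the shift in index).

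It remains to see that $T$ is a bijection. Injectivity is the incoming half of bidirectionality: if $T(x)=T(x')=y$, then $x_{i+1}$ and $x'_{i+1}$ are both in-neighbors of $y_{i+1}$, so $x_i=\phi_i(x_{i+1})=\phi_i(x'_{i+1})=x'_i$ for all $i$. Surjectivity I would prove by the mirror construction: using in-neighbors instead of out-neighbors, bidirectionality gives a well-defined $\chi_i\colon V_{i+1}\to V_i$, and setting $x_i=\chi_i(y_{i+1})$ produces a preimage of a prescribed $y\in V_\G$, with the same two verifications (edge membership and coherence) carried out symmetrically. Once $T$ is a continuous bijection of the compact metric space $V_\G$, it is automatically a homeomorphism.

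I expect the main obstacle to be the coherence checks, i.e.\ verifying that the coordinatewise, locally defined choices actually thread together into an element of the inverse limit $V_\G$. The point to watch is that the out-neighbor set $\{z:(x_i,z)\in E_i\}$ may contain several vertices, so $x_i$ does not determine its successor; it is only the compatibility constraint $\phi_i(y_{i+1})=y_i$ across levels, combined with bidirectionality, that singles out the unique coordinate $T(x)_i=\psi_i(x_{i+1})$. Keeping straight which property (edge surjectivity for existence of neighbors, the homomorphism property for edge membership, bidirectionality for single-valuedness and injectivity) is used at each step is the delicate bookkeeping, but no step requires more than a one-line verification.
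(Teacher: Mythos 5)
Your proposal is correct, and every step checks out: the well-definedness of $\psi_i$ uses edge surjectivity for existence and the outgoing half of bidirectionality for single-valuedness; the edge membership $(x_i,T(x)_i)\in E_i$ and the coherence $\phi_i(T(x)_{i+1})=T(x)_i$ follow exactly as you say from applying the homomorphisms to outgoing edges (your observation that $\phi_{i+1}(w')$ is an out-neighbor of $x_{i+1}$ is precisely the needed point); uniqueness of the successor and injectivity use the two halves of bidirectionality coordinatewise; and the mirror construction with in-neighbors gives surjectivity symmetrically. One remark on the comparison: the paper does not actually prove this lemma at all --- it is quoted verbatim from Shimomura \cite[Lemma 3.5]{Shim2}, with only the one-line comment that injectivity of $E_\G$ relies on bidirectionality. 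So your write-up supplies a self-contained argument for what the paper outsources to a citation, and it is essentially the standard argument found in Shimomura's paper: the single-valued map $\psi_i$ induced on vertices by bidirectionality is the whole mechanism, and your careful bookkeeping of which hypothesis (edge surjectivity, homomorphism property, each half of bidirectionality) enters at each step is exactly the content hidden behind the reference. The only cosmetic simplification available is that, having shown every $x$ has at least one $E_\G$-successor and at most one, and symmetrically for predecessors, the bijectivity of the relation follows at once; but your route through an explicit $T$ and the compactness argument (continuous bijection of a compact metric space is a homeomorphism) is equally clean and makes continuity transparent, since $T(x)_i$ depends only on $x_{i+1}$.
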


\section{A special homeomorphism}
\subsection{Construction of homeomorphism $(V_\G,T_\G)$}\label{sec:constr}

For every integer $n>0$ we define a special vertex $v_{n,0}\in V_n$ and a special edge $e_{n,0}=(v_{n,0},v_{n,0})\in E_n$ and we put $G_0=(\{v_{0,0}\},\{e_{0,0}\})$.
Next we will specify other vertexes in $V_n$ and accompanying edges, so that a graph $G_n$ is defined.
Our aim is to construct a special sequence of bd-covers. In particular we put $\phi_n(v_{n+1,0})=v_{n,0}$.
We embed in each $V_n$ exactly $n$ additional cycles $c_{n,1},\ldots, c_{n,n}$ (of appropriate length, which will be determined later), such that each cycle starts in $v_{n,0}$
and all the other vertexes are pairwise distinct. For $n=1$ we include in $V_1$ a cycle $c_{1,1}$ of length $10$ and define $\phi_1(c_{1,1})=10 e_{0,0}$.

We assume that length of each $c_{n,i}$ is appropriate so that the following bd-covers are well defined. For $2\leq i \leq n$ we put
$$
\phi_n(c_{n+1,i})=e_{n,0}+2c_{n,i}+\ldots+2c_{n,n}+e_{n,0}.
$$
We also put $\phi_n(c_{n+1,n+1})=(n+2)^2(\sum_{i=1}^n|c_{n,i}|)e_{n,0}$ and finally
\begin{eqnarray*}
\phi_n(c_{n+1,1})&=&e_{n,0} + 2 c_{n,1}+2e_{n,0}+2 c_{n,1}+\ldots\\&&\quad +k_n e_{n,0}+2 c_{n,1}
+ e_{n,0}+2c_{n,2}+\ldots+2c_{n,n}+e_{n,0},
\end{eqnarray*}
where $k_n=2(|e_{n,0}|+\sum_{i=1}^n |c_{n,i}|)$.
Since first and the last edge in each cycle $c_{n+1,i}$ is sent by $\phi_n$ onto $e_{n,0}$ each $\phi_n$ is a bd-cover.

\begin{defn}
Let $\G=\seq{\phi_i}_{i=0}^\infty$ be the sequence defined above and 
we denote by $T_\G\colon V_\G\to V_\G$ the homeomorphism induced by $E_\G$
in view of Lemma~\ref{lem:Tg}.
\end{defn}
\subsection{Properties of $(V_\G,T_\G)$}
In this section we prove a few auxiliary lemmas about properties of the system $(V_\G,T_\G)$ constructed in Section~\ref{sec:constr}. They will be used later in the proof of Theorem~\ref{main}.
For simplicity of notation we denote $X=V_\G$ and $T=T_\G$.

Denote $p=(v_{0,0},v_{1,0},v_{2,0},\ldots)$. It is not hard to see that $p\in V_\G$ and $(p,p)\in E_\G$. In particular, $p$ is a fixed point of $T$.
We easily obtain the following.

\begin{lem}\label{Lem:OnlyPeriodic}
The point $p$ is the only periodic point of $(X,T)$.
\end{lem}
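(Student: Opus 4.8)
The plan is to exploit the fact that $T$ acts at each coordinate by following edges, so that a periodic orbit projects, at every level $n$, to a closed walk whose length is forced to be small, while the cycles available in $G_n$ become arbitrarily long.

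First I would record the local structure of $G_n$. Each graph $G_n$ is a bouquet of the self-loop $e_{n,0}$ together with the cycles $c_{n,1},\dots,c_{n,n}$, all attached at the single vertex $v_{n,0}$; every vertex other than $v_{n,0}$ lies in the interior of exactly one cycle and therefore has a unique incoming and a unique outgoing edge. Consequently, once a walk leaves $v_{n,0}$ along the first edge of some $c_{n,i}$ it is forced to traverse all of $c_{n,i}$ before it can return to $v_{n,0}$. Since $(x,Tx)\in E_\G$ gives $(x_j,(Tx)_j)\in E_j$ for all $j$, the orbit of a point $x$ with $T^q(x)=x$ projects at level $n$ to a closed walk of length exactly $q$ (a graph homomorphism preserves the length of a walk). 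Any such closed walk must visit $v_{n,0}$, because from any interior vertex the forced forward edges reach $v_{n,0}$ in finitely many steps; rotating it to begin at a visit of $v_{n,0}$ and using the previous remark, it decomposes into successive excursions from $v_{n,0}$, each of which is either the self-loop $e_{n,0}$, of length $1$, or a full traversal of some cycle $c_{n,i}$, of length $|c_{n,i}|$. In particular the number $q$ is a sum of $1$'s and of numbers $|c_{n,i}|$.

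Next I would show that the cycle lengths diverge, $\min_{1\le i\le n}|c_{n,i}|\to\infty$ as $n\to\infty$. Because each $\phi_n$ is a graph homomorphism, $|c_{n+1,i}|=|\phi_n(c_{n+1,i})|$ equals the length of the right-hand side in the defining formula; reading these off gives, for instance, $|c_{n+1,i}|=2+2\sum_{j=i}^{n}|c_{n,j}|$ for $2\le i\le n$ and $|c_{n+1,n+1}|=(n+2)^2\sum_{i=1}^n|c_{n,i}|$, together with an analogous (larger) value for $c_{n+1,1}$. An elementary induction then yields a lower bound $\min_i|c_{n,i}|\ge\rho_n$ with $\rho_n\to\infty$, so that for a fixed period $q$ there is an $N$ with $|c_{n,i}|>q$ for all $n\ge N$ and all $i$.

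With these two facts the conclusion is immediate. Fix a periodic point $x$ of period $q$ and take $n\ge N$. In the decomposition of the length-$q$ closed walk at level $n$ no excursion can be a full cycle, since a single such excursion already has length greater than $q$; hence every excursion is the self-loop $e_{n,0}$, so the walk equals $q\,e_{n,0}$ and in particular $x_n=v_{n,0}$. Thus $x_n=v_{n,0}$ for all $n\ge N$, and since $\phi_n(v_{n+1,0})=v_{n,0}$, a downward induction gives $x_n=\phi_n(x_{n+1})=v_{n,0}$ for every $n$, that is, $x=p$. I expect the main obstacle to be the structural step: verifying that a closed walk in $G_n$ must break into full-cycle and self-loop blocks (which rests on the uniqueness of edges at interior vertices) and confirming the divergence of the cycle lengths from the recursive length formulas. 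Once these are in place, finiteness of the period forces $x=p$.
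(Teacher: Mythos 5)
Your proposal is correct and takes essentially the same route as the paper, whose (much terser) proof likewise argues that the lengths $|c_{n,i}|$ grow without bound while the cycles have pairwise distinct vertices, so a periodic point must satisfy $z_n=v_{n,0}$ for all $n$ and hence equal $p$. Your write-up merely fills in the details the paper leaves implicit (the excursion decomposition of a closed walk at $v_{n,0}$, the recursive lower bounds on cycle lengths, and the downward induction via $\phi_n(v_{n+1,0})=v_{n,0}$), all of which check out.
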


\begin{proof}
Note that length of each cycle $c_{n,i}$ increases with $n$ and each such cycle has pairwise distinct vertexes.	
Hence if $z$ is a periodic point then for sufficiently large $n$ it may not contain any vertex from $c_{n,i}$, and so the only choice is $z_n=v_{n,0}$
for each $n$. Indeed $z=p$.
\end{proof}

\begin{lem}\label{lem:prox}
For every $x\in X$ we have $p\in \omega_T(x)$.
\end{lem}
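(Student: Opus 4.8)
The plan is to reduce the membership $p\in\omega_T(x)$ to a purely combinatorial statement about forward walks in a single finite graph $G_N$, exploiting two features of the construction: that each $\phi_n$ carries the distinguished vertex to the distinguished vertex, and that each $G_N$ is a bouquet of cycles glued only at $v_{N,0}$.

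First I would record the crucial collapsing property. Since $\phi_i(v_{i+1,0})=v_{i,0}$ for every $i$, and since every point $z\in X=V_\G$ satisfies $\phi_i(z_{i+1})=z_i$, it follows that whenever the $N$-th coordinate of a point $z\in X$ equals $v_{N,0}$ we automatically have $z_i=\phi_{i,N}(v_{N,0})=v_{i,0}$ for all $i\le N$. Consequently $d(z,p)\le 2^{-N}$ as soon as $z_N=v_{N,0}$. Thus, to prove $p\in\omega_T(x)$ it suffices to show that for each fixed $N$ there are arbitrarily large $m$ with $(T^m x)_N=v_{N,0}$; that is, the forward orbit of the $N$-th coordinate returns to the distinguished vertex infinitely often.

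Next I would analyze the walk $m\mapsto (T^m x)_N$ in the finite graph $G_N$. By the definition of $E_\G$, consecutive terms of this sequence form an edge of $E_N$, so the sequence is a forward edge-walk in $G_N$. Now $G_N$ consists of $v_{N,0}$ together with the cycles $c_{N,1},\dots,c_{N,N}$, which meet only at $v_{N,0}$ and otherwise have pairwise distinct vertices. Hence every vertex other than $v_{N,0}$ lies on a unique cycle and, being edge-surjective with all other vertices distinct, has exactly one outgoing edge, namely the one continuing along its cycle. Therefore, starting from $x_N$, the walk is forced along whichever cycle it currently occupies and reaches $v_{N,0}$ after at most $\max_i |c_{N,i}|$ steps; moreover each subsequent departure from $v_{N,0}$, whether along a cycle or along the loop $e_{N,0}$, returns to $v_{N,0}$ within the same bound. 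Consequently $(T^m x)_N=v_{N,0}$ for infinitely many, hence arbitrarily large, $m$.

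Combining the two steps would then give the conclusion: for each $N$ one finds arbitrarily large $m$ with $(T^m x)_N=v_{N,0}$, whence $d(T^m x,p)\le 2^{-N}$, so $p$ is a limit point of the forward orbit of $x$ and $p\in\omega_T(x)$. I expect the only delicate point to be justifying that the non-central vertices of $G_N$ have out-degree exactly one; this is precisely where the hypothesis that the cycles share only the vertex $v_{N,0}$ is essential, since it is what makes the walk deterministic away from the distinguished vertex and thereby forces the repeated returns.
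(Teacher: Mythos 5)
Your proof is correct and follows essentially the same route as the paper's: reduce $p\in\omega_T(x)$ to showing the level-$N$ coordinate of the orbit hits $v_{N,0}$ at arbitrarily large times, then use the bouquet-of-cycles structure of $G_N$ to force a return to $v_{N,0}$ within at most $\max_i |c_{N,i}|$ steps. Your version merely makes explicit two points the paper leaves implicit --- that $z_N=v_{N,0}$ forces $z_i=v_{i,0}$ for all $i\le N$ via $\phi_i(v_{i+1,0})=v_{i,0}$, and that non-central vertices have out-degree one, so the walk along a cycle is deterministic --- which is a fine bit of extra care.
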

\begin{proof}
Fix any integers $n>0$ and $m>0$ and let $y=T^n(x)$. If $T^n(x)_m=v_{m,0}$ then $T^n(x)_i=v_{i,0}$ for all $i\leq m$ and so $d(x,p)\leq 2^{-m}$.
In the case $T^n(x)_m\neq v_{m,0}$, vertex belongs $T^n(x)_m$ to some cycle $c_{m,j}$ and then in a finite number of iterations (say $l>0$) $T^{n+l}(x)_m$ reach terminal vertex of the cycle
$c_{m,j}$ which is $v_{m,0}$. This shows that for every $n$ there is $s\geq n$ such that $T^s(x)_m=v_{m,0}$ and so $d(T^s(x),p)\leq 2^{-m}$.
Therefore $d(\omega_{T}(x),p)\leq 2^{-m}$, and since $m$ can be arbitrarily large and $\omega_{T}(x)$ is closed, we have $p\in \omega_{T}(x)$.
\end{proof}

\begin{cor}\label{cor:prox}
Dynamical system $(X,T)$ is proximal.
\end{cor}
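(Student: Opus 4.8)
The plan is to deduce proximality from Lemma~\ref{lem:prox} by working in the product system $(X\times X,T\times T)$, rather than trying to combine the two individual $\omega$-limit conditions coordinatewise. It is worth appreciating first why the naive attempt fails: although Lemma~\ref{lem:prox} gives $p\in\omega_T(x)$ and $p\in\omega_T(y)$ \emph{separately}, there is no immediate reason that one and the same sequence of times should push $T^n(x)$ and $T^n(y)$ simultaneously close to $p$, and in general $\omega_{T\times T}(x,y)$ can be much smaller than $\omega_T(x)\times\omega_T(y)$. So the real content is to manufacture a single common sequence of return times to a neighbourhood of the diagonal.

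First I would record the structural consequence of Lemma~\ref{lem:prox} that every minimal subset of $(X,T)$ equals $\{p\}$. Indeed, if $M\subset X$ is minimal and $z\in M$, then invariance and closedness of $M$ give $\omega_T(z)\subset M$, while Lemma~\ref{lem:prox} gives $p\in\omega_T(z)$; since $p$ is a fixed point, $\{p\}$ is a nonempty closed invariant subset of $M$, so minimality forces $M=\{p\}$.

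Next I would fix $x,y\in X$ and pass to $\omega_{T\times T}(x,y)$, which is nonempty, closed and $(T\times T)$-invariant, and therefore contains a minimal subset $M'$ of $(X\times X,T\times T)$. Choosing $(a,b)\in M'$, I would apply the two coordinate projections $\pi_1,\pi_2\colon X\times X\to X$, each of which restricts to a factor map from $(M',T\times T)$ onto its image; hence $\pi_1(M')$ and $\pi_2(M')$ are minimal subsets of $(X,T)$. By the previous step each equals $\{p\}$, so $a=b=p$, and thus $(p,p)\in M'\subset\omega_{T\times T}(x,y)$.

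Finally, $(p,p)\in\omega_{T\times T}(x,y)$ yields a sequence $n_k\to\infty$ with $(T^{n_k}(x),T^{n_k}(y))\to(p,p)$, so $d(T^{n_k}(x),T^{n_k}(y))\to 0$ and hence $\liminf_{n\to\infty}d(T^n(x),T^n(y))=0$; as $x,y$ were arbitrary, every pair is proximal and $(X,T)$ is proximal. The main obstacle is precisely the simultaneity issue flagged above, and the device that resolves it is the elementary but essential fact that a factor image of a minimal set is minimal, which transports the uniqueness of the minimal set in $X$ up to the product system.
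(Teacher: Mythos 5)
Your proof is correct and follows the same route as the paper: both first use Lemma~\ref{lem:prox} to conclude that $\{p\}$ is the unique minimal subsystem of $(X,T)$, and then derive proximality from that. The only difference is that the paper invokes the known equivalence (citing \cite{Akin}) between proximality and having a unique minimal set which is a fixed point, whereas you prove the needed implication from scratch --- locating a minimal set inside $\omega_{T\times T}(x,y)$ and projecting it to $\{p\}$ --- which is exactly the standard proof of the cited fact, correctly handling the simultaneity issue you flag.
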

\begin{proof}
By Lemma~\ref{lem:prox} the fixed point $p$ belongs to every $\omega$-limit set, hence $\{p\}$ is the unique minimal subsystem in $X$.
But it is well known (e.g. see \cite{Akin}) that it equivalently means that $(X,T)$ is proximal.
\end{proof}	

\begin{lem}\label{lem:mixing}
Dynamical system $(X,T)$ is topologically mixing.
\end{lem}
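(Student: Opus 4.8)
The plan is to work directly with the clopen cylinders $[w]_n=\{x\in X: x_n=w\}$, $w\in V_n$, which form a basis of the topology of $X$ (by the metric $d$, two points are close exactly when they agree on a long initial block of coordinates). Since the projections $\phi_{\infty,n}$ are surjective, every such cylinder is nonempty, and because $T=T_\G$ is induced by the edge relation $E_\G$, the coordinate $(T^mx)_n$ is obtained from $x_n$ by following $m$ consecutive edges of $G_n$ along the orbit of $x$. Hence $T^m([u]_n)\cap[v]_n\neq\emptyset$ precisely when some point of $X$ has a level-$n$ orbit reading a path of length $m$ from $u$ to $v$ in $G_n$. So it suffices to prove: for every $n$ there is $N_n$ such that for all $m\geq N_n$ and all $u,v\in V_n$, some point of $X$ realizes a length-$m$ path from $u$ to $v$ at level $n$. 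As only finitely many vertices are involved, I would treat each pair $(u,v)$ separately and then take $N_n$ to be the maximum over the finite set of pairs.

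Two combinatorial features of the graphs $G_n$ drive the argument. First, $G_n$ is strongly connected: every vertex lies on one of the cycles $c_{n,1},\dots,c_{n,n}$, each beginning and ending at $v_{n,0}$, so from any vertex the orbit reaches $v_{n,0}$ by completing its cycle, and from $v_{n,0}$ it can reach any prescribed vertex by entering the appropriate cycle; this yields, for each $u,v$, a path $u\to v_{n,0}$ of some fixed length $\alpha$ and a path $v_{n,0}\to v$ of some fixed length $\beta$. Second, and this is what supplies length-flexibility, the self-loop $e_{n,0}=(v_{n,0},v_{n,0})$ lets the orbit idle at $v_{n,0}$, and the increasing self-loop blocks $1\cdot e_{n,0},2\cdot e_{n,0},\dots,k_n e_{n,0}$ occurring inside $\phi_n(c_{n+1,1})$ (together with the ever longer runs produced by $\phi_{N,n}(c_{N,1})$ for $N>n+1$) show that self-loop runs at $v_{n,0}$ of every length can be read at level $n$. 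Combining the two, the combinatorial target is a level-$n$ path of the form (finish the cycle through $u$ to reach $v_{n,0}$)(an $L$-fold self-loop run)(enter the cycle through $v$), of total length $\alpha+L+\beta$; letting $L$ range over all large integers then hits every $m\geq N_n$.

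The hard part will be \emph{realizability}: checking that such a concatenated level-$n$ path is genuinely the projection of the forward orbit of some $x\in X$, i.e. that it lifts consistently through all higher levels. Bidirectionality — which in Lemma~\ref{lem:Tg} makes $E_\G$ a homeomorphism — does \emph{not} grant free path lifting, since in a bd-cover all edges out of a vertex have the same projected target, so an arbitrary path of $G_n$ need not lift. Instead I would produce the lift explicitly from the cover formulas: every $\phi_n(c_{n+1,i})$ begins and ends with $e_{n,0}$, so the blocks read off the cycles $c_{n+1,i}$ meet only at the hub $v_{n,0}$ and are concatenable there, and the long self-loop runs inside $\phi_n(c_{n+1,1})$, bracketed there by copies of $c_{n,1}$, are exactly the idling segments required. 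Concretely, for a target $m$ I would pick a high level $N$ and the cycle $c_{N,1}$ whose image $\phi_{N,n}(c_{N,1})$ visits every vertex of $G_n$ many times and contains a self-loop run of the prescribed length; placing an occurrence of $u$ before that run and an occurrence of $v$ at distance $m$ after it, any point of $X$ sitting at the corresponding position of $c_{N,1}$ (such a point exists since $\phi_{\infty,N}$ is surjective) has a level-$n$ orbit reading the required length-$m$ path. The essential but routine bookkeeping is to verify that occurrences of $u$ and $v$ can always be positioned on either side of an arbitrarily long self-loop run, which is precisely what the explicit design of $\phi_n(c_{n+1,1})$ is built to guarantee.
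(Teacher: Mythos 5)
Your skeleton matches the paper's proof quite closely: both arguments reduce mixing to hitting times between cylinders $[u]_m$ and $[v]_m$; both realize the needed level-$m$ paths as shadows of the single cycle $c_{N,1}$ at a high level $N$ (your surjectivity-of-$\phi_{\infty,N}$ device is the paper's remark that each $c_{j,1}$ is an \emph{inner} subpath of $\phi_j(c_{j+1,1})$, combined with the fact that every non-hub vertex has a unique outgoing edge, so the level-$N$ orbit is forced along the cycle); and both extract the length flexibility from the increasing idle blocks $k\,e_{n,0}$, $k=1,\dots,k_n$, built into $\phi_n(c_{n+1,1})$. You are also right, and it is an essential point, that bidirectionality gives no free path lifting, so realizability must be arranged inside an actual high-level cycle rather than by lifting a level-$n$ path.

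The one place where you undersell the difficulty is the concluding ``routine bookkeeping.'' The formula for $\phi_n(c_{n+1,1})$ by itself only supplies bracketed idle runs of lengths up to $k_n$; every longer run at level $n$ occurs only as the projection of idle structure at some higher level (a run of $e_{N,0}$'s, or an image of the pure-loop cycle $c_{N,N}$), and what sits at level $n$ immediately before and after such a run is dictated by that higher-level context, not by the design of $\phi_n(c_{n+1,1})$. Hence, to conclude that occurrences of $u$ and $v$ appear at distance exactly $m$ for \emph{every} large $m$ --- i.e., that the set of achievable distances has no holes --- you must propagate the bracketing through the whole tower. This is precisely the content of the inductive claim (i)--(iii) in the paper's proof: item (i) records that $\phi_{m+j,m}(c_{m+j,1})$ contains two copies of $c_{m,1}$ separated by each gap $0,1,\dots,k_{m+j-1}$, while items (ii) and (iii) control the prefix ($j\,e_{m,0}+c_{m,1}$) and suffix ($c_{m,1}+\alpha$ with $|\alpha|\le k_{m+j-1}-j$) padding, exactly so that at the next level the new gaps $|\alpha|+|\eta|+j$, with $|\eta|=0,\dots,k_{m+j}$, splice onto the old range $0,\dots,k_{m+j-1}$ without leaving a hole. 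Note also that the paper brackets with whole copies of $c_{m+1,1}$ --- whose $\phi_m$-image contains every vertex of $V_m$ --- rather than with your $\alpha+L+\beta$ hub decomposition; this makes the offsets of the $u$- and $v$-occurrences manifestly independent of the gap length, whereas nearest occurrences of $u$ and $v$ around a long run could in principle drift with $L$. So your plan is completable and follows the same route as the paper, but the deferred induction is the heart of the argument, not an afterthought, and as stated your sketch has a genuine gap exactly there.
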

\begin{proof}
By the definition of product topology, it is enough to show that for any $m$ and any vertexes $u,v\in V_m$ there is $N>0$ such that for every $n\geq N$
there is $x\in X$ such that $x_m=u$ and $T^n(x)_m=v$.
Observe that $\phi_m(c_{m+1,1})$ contains all vertexes from $V_m$. So it is enough to show that there is $N>0$ such that for every $n\geq 0$ there is
a point $x\in X$ such that path induced by $T^i(x)_{m+1}$ on $(V_{m+1},E_{m+1})$ contains $c_{m+1,1}+\eta+c_{m+1,1}$ as a subpath, with length of intermediate path $|\eta|=n$.
Note that each $c_{j,1}$ is an inner element of the path $\phi_j(c_{j+1,1})$, i.e. subpath not containing the first or last vertex, hence for any $j$ it is not hard to find $x\in V_\G$
such that $T^{i-1}(x)_j$ is exactly the $i$-th vertex of $c_{j,1}$ for each $0< i \leq |c_{j,1}|$.

We claim that:
\begin{enumerate}[(i)]
	\item\label{mix:c1} for each $j\geq 1$ the cycle $\phi_{m+j ,m}(c_{m+j,1})$ contains cycle $c_{m,1}+\eta_i+c_{m,1}$ with intermediate path $\eta_i$ of length $|\eta_i|=i$,
	where $i=0,\ldots, k_{m+j-1}$.
	\item\label{mix:c2} $j e_{m,0}+c_{m,1}$ is a prefix of $\phi_{m+j ,m}(c_{m+j,1})$,
	\item\label{mix:c3} there is $\alpha$ with $|\alpha|\leq k_{m+j-1}-j$ such that $c_{m,1}+\alpha$ is a suffix of $\phi_{m+j ,m}(c_{m+j,1})$.
\end{enumerate}
The claim is clear for $j=1$ since $c_{m,1}+k e_{m,0}+c_{m,1}$ is a subpath of $\phi_{m+1 ,m}(c_{m+1,1})=\phi_m(c_{m+1,1})$
for $k=0,\ldots,k_m$, $\phi_m(c_{m+1,1})$ starts with $e_{m,0}+ c_{m,1}$ and ends with path
$$c_{m,1}
+ e_{m,0}+2c_{m,2}+\ldots+2c_{m,m}+e_{m,0}$$ whose length is exactly $k_m-|c_{m,1}|\leq k_m-m$.

Next assume that the claim holds for some $j\geq 0$ and consider
$$
\xi=\phi_{m+j+1 ,m}(c_{m+j+1,1})=\phi_{m+j ,m}(\phi_{m+j+1}(c_{m+j+1,1})).
$$
This cycle contains as a subpath
\begin{equation}
\phi_{m+j ,m}(c_{m+j,1})+\eta+\phi_{m+j ,m}(c_{m+j,1})\label{eq:sc*}
\end{equation}
for each path $\eta$ of lengths $|\eta|=0,1,\ldots, k_{m+j}$.
By assumption \eqref{mix:c1} each $\phi_{m+j ,m}(c_{m+j,1})$ contains a copy of $c_{m,1}+\alpha+c_{m,1}$ with any $|\alpha|=0,\ldots, k_{m+j-1}$.

On the other hand, by assumptions \eqref{mix:c2} and \eqref{mix:c3} and \eqref{eq:sc*} there is a path $\alpha$ of length $|\alpha|\leq k_{m+j-1}$ such that
we can find in $\xi$ a path $c_{m,1}+\gamma+c_{m,1}$
with $\gamma$ of length $|\gamma|=|\alpha|+|\eta|$. 
The claim is proved, and so the result follows.
\end{proof}

As the last element we will show that $(X,T)$ does not contain an asymptotic pair. The following notion of degree was used in \cite{Shim}.
\begin{defn}
For each $v\in V_n$ we define its degree by
$$
\deg(v)=\begin{cases}
+\infty &, \text{if }v=v_{n,0},\\
i &, \text{if }v\in V(c_{n,i})\setminus \{v_{n,0}\},
\end{cases}
$$
and next for $x\in X$ we put $\deg(x)=\min_{i}\deg(x_i)$.
\end{defn}
It is not hard to see that for each $x\in X$ and each $n<m$
we have $\deg(x_n)\geq \deg(x_m)\geq 1$. Clearly, if $\deg(x)=+\infty$ then $x=p$. It is also clear that if $\deg(x)=i<+\infty$
then there is $N$ such that $x_n\in V(c_{n,i})\setminus \{v_{n,0}\}$ for all $n\geq N$.
\begin{rem}\label{rem:star}
Note that $\phi_n(c_{n+1},i)$ starts and ends with edge $e_{n,0}$
hence, if $x_n\in V(c_{n,i})\setminus \{v_{n,0}\}$ for $n\geq N$ then $T(x)_n\in V(c_{n,i})\setminus \{v_{n,0}\}$ for $n> N$.
\end{rem}

Remark~\ref{rem:star} immediately proves the following Lemma which is an analogue of Lemma~3.11 in \cite{Shim}.

\begin{lem}\label{lem:def:const}
For each $x\in X$ we have $\deg(x)=\deg(T(x))$.
\end{lem}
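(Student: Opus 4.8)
The plan is to split according to whether $\deg(x)$ is finite, and in each case reduce the claim to the tail behavior of the coordinate sequence of $x$, which is exactly what Remark~\ref{rem:star} governs. Throughout I will use the two facts recorded just before the lemma: that $\deg(x)=+\infty$ forces $x=p$, and that $\deg(x)=i<+\infty$ forces $x_n\in V(c_{n,i})\setminus\{v_{n,0}\}$ for all sufficiently large $n$.

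First I would dispose of the degenerate case $\deg(x)=+\infty$. Then $x=p$, and since $(p,p)\in E_\G$ the point $p$ is fixed, so $T(x)=p$ and $\deg(T(x))=+\infty=\deg(x)$.

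Next, suppose $\deg(x)=i<+\infty$. By the stated characterization there is an $N$ with $x_n\in V(c_{n,i})\setminus\{v_{n,0}\}$ for every $n\geq N$. Remark~\ref{rem:star} now applies verbatim at level $i$: because each $\phi_n(c_{n+1,i})$ begins and ends with the edge $e_{n,0}$, membership of $x_n$ in $V(c_{n,i})\setminus\{v_{n,0}\}$ for all $n\geq N$ forces $T(x)_n\in V(c_{n,i})\setminus\{v_{n,0}\}$ for all $n>N$. Hence $\deg(T(x)_n)=i$ for all $n>N$. To pass from this tail statement to $\deg(T(x))=\min_n\deg(T(x)_n)$ I would invoke the monotonicity already recorded, namely that $\deg(y_n)$ is non-increasing in $n$ for every $y\in X$: applied to $y=T(x)$, the coordinate degrees are $\geq i$ for $n\leq N$ and equal to $i$ for $n>N$, so the minimum is exactly $i$. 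This gives $\deg(T(x))=i=\deg(x)$.

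The only point needing care — and it is genuinely minor, which is why the lemma is called immediate — is this last transition from ``the tail of $T(x)$ remains in the correct cycle'' to ``the degree equals $i$''. A priori an early coordinate $T(x)_n$ with $n\leq N$ could sit at the base vertex $v_{n,0}$ (degree $+\infty$) or on a different cycle, so one cannot read off the minimum from the tail alone; it is precisely the non-increasing property of the coordinate degrees that prevents the early entries from falling below $i$ and thereby guarantees they do not affect $\min_n\deg(T(x)_n)$.
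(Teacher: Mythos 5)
Your proof is correct and follows the same route as the paper, which simply observes that Remark~\ref{rem:star} immediately yields the lemma; you have merely spelled out the details the paper leaves implicit (the trivial case $x=p$, and the use of the non-increasing property of coordinate degrees to pass from the tail statement to the actual minimum). The care you take at that last step is exactly the right point to check, and your handling of it is sound.
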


The following observation will simplify further considerations.
\begin{lem}\label{lem:deg:est}
Suppose that $\deg(x)=i<\infty$ and fix any $n>i$.
Then for every $m$ we have $\min_{j\geq m} \deg (T^j(x)_n)\leq i+1$.
\end{lem}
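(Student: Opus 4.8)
The plan is to prove the stronger statement that $\deg(T^j(x)_n)=i$ (not merely $\le i+1$) for arbitrarily large $j$, from which the claim is immediate; the slack in the exponent will only be needed to absorb a boundary case. The engine is a \emph{cascade} coming from the shape of the covers. Since every $\phi_l(c_{l+1,i})$ begins with $e_{l,0}+2c_{l,i}+\cdots$, if at some time $t$ the coordinate $T^t(x)_{l+1}$ sits at the initial vertex of a fresh traversal of $c_{l+1,i}$, then one step later $T^{t+1}(x)_l$ is forced onto the first vertex of $c_{l,i}$. Iterating this from a high level $L'$ down to $n$ shows that a fresh entry of the $(L')$-th coordinate into $c_{L',i}$ at time $t$ produces $T^{t+(L'-n)}(x)_n\in V(c_{n,i})\setminus\{v_{n,0}\}$, hence $\deg(T^{t+(L'-n)}(x)_n)=i$. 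Thus it suffices to produce, for every $m$, a fresh entry into $c_{L',i}$ at some time $t>m$ for a suitable $L'$.

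To locate such fresh entries I would use the note preceding the statement together with $\deg(T^j(x))=i$ (Lemma~\ref{lem:def:const}). By that note there is $N$ with $x_l\in V(c_{l,i})\setminus\{v_{l,0}\}$ for all $l\ge N$, and the coherence $x_l=\phi_l(x_{l+1})$ combined with $\phi_l(c_{l+1,i})=e_{l,0}+2c_{l,i}+\cdots$ forces $x_l$ to lie in the initial $c_{l,i}$-block, i.e. near the start of $c_{l,i}$; consequently the number of steps $\tau_{L'}$ before the $(L')$-th coordinate finishes its current $c_{L',i}$ and, by the doubling $2c_{L',i}$, \emph{re-enters} $c_{L',i}$ tends to $\infty$ as $L'\to\infty$. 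Given $m$, I would pick $L'$ so large that $\tau_{L'}>m$; the re-entry is a fresh entry into $c_{L',i}$ at a time $>m$, and the cascade above then yields $T^j(x)_n\in V(c_{n,i})\setminus\{v_{n,0}\}$ for some $j>m$. This gives $\min_{j\ge m}\deg(T^j(x)_n)=i\le i+1$.

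The step I expect to be the main obstacle is precisely the recurrence of fresh entries, i.e. the claim that $\tau_{L'}\to\infty$ and that the re-entry actually takes place. This is where the quantitative choices in the construction (the doubling $2c_{l,i}$, and the prescribed lengths encoded by $k_n$ and the factor $(n+2)^2$) must be used: one must verify that the $(L'+1)$-th coordinate is still traversing its own $c_{L'+1,i}$ at the moment the $(L')$-th coordinate finishes, so that the block $2c_{L',i}$ is indeed supplied from above. A convenient way to organise this, and to see where the weaker bound $i+1$ buys robustness, is to track $M_j=\min\{L:\deg(T^j(x)_L)\le i+1\}$; using $\deg(T^j(x))=i$ one checks $M_j<\infty$, and the implication ``$\deg(T^j(x)_L)\le i+1\Rightarrow\deg(T^j(x)_{L+1})\le i+1$'' shows that $\{L:\deg(T^j(x)_L)\le i+1\}$ is an up-set, so the lemma is equivalent to $\liminf_j M_j\le n$. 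Whenever a fresh cycle-$i$ entry fails to be available at the top of the cascade, the neighbouring block $2c_{l,i+1}$ of $\phi_l(c_{l+1,i})$ still lands level $n$ on $c_{n,i+1}$, which is exactly the content of the $+1$; establishing that one of these two alternatives recurs at a level $\le n$ for arbitrarily large times is the crux.
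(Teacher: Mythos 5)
Your proposed engine fails, and the failure is not a ``boundary case'': the strengthened claim that $\deg(T^j(x)_n)=i$ for arbitrarily large $j$ is false. The coherence $x_l=\phi_l(x_{l+1})$ forces $x_{l+1}$ (for large $l$) to sit within the initial block $e_{l,0}+2c_{l,i}$ of $\phi_l(c_{l+1,i})$, but it may sit in the \emph{second} of the two copies of $c_{l,i}$ --- and this can happen at every level simultaneously: the positions $t_{l+1}=t_l+|c_{l,i}|+1$ along $c_{l+1,i}$ are coherent because $|c_{l+1,i}|$ grows much faster than $\sum_{r\leq l}(|c_{r,i}|+1)$. For such a point $x$ one still has $\deg(x)=i$, yet once the level-$l$ coordinate finishes its current copy of $c_{l,i}$, the continuation at level $l$ is $2c_{l,i+1}+\ldots+2c_{l,l}+e_{l,0}$ followed by images of cycles of index $\geq i+1$ coming from the higher levels (note $\phi_l(c_{l+1,j})$ for $j> i$ involves only cycles of index $\geq j$, and $\phi_l(c_{l+1,l+1})$ only the loop $e_{l,0}$); since every level is in its second copy, \emph{no} level ever re-enters its index-$i$ cycle, there are no ``fresh entries'' at any level $L'$ after a finite time, and $\deg(T^j(x)_n)\geq i+1$ for all large $j$. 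So $\tau_{L'}$ is not merely large --- the re-entry you rely on need never occur, and the $+1$ in the statement is the actual content of the lemma, not slack. Your final paragraph half-recognizes this (the $M_j$/up-set reformulation, $\liminf_j M_j\leq n$, is correct), but there you explicitly leave unproven ``the crux'' that one of the two alternatives recurs; that is precisely the lemma, so the proposal contains no complete argument.

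The paper's proof avoids the return problem by propagating cycle-$i$ membership forward in time \emph{up} the tower rather than waiting for a return at level $n$: from $x_j\in V(c_{j,i})\setminus\{v_{j,0}\}$, Remark~\ref{rem:star} gives by induction $T^m(x)_{j+m}\in V(c_{j+m,i})\setminus\{v_{j+m,0}\}$ for every $m$. Then one uses that within $\phi_{j+m}(c_{j+m+1,i})$ every occurrence of $c_{j+m,i}$ is eventually followed by a full copy of $c_{j+m,i+1}$ (this holds in both the $i=1$ and $i\geq 2$ patterns of the construction), and that a full copy of $c_{l,i+1}$ contains a subpath mapping under $\phi_{l,n}$ onto a full copy of $c_{n,i+1}$ --- here the hypothesis $n>i$ enters. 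Hence at some time $m+r\geq m$ the level-$n$ coordinate lies in $V(c_{n,i+1})$, giving $\min_{j\geq m}\deg(T^j(x)_n)\leq i+1$ directly. This one-directional mechanism (up in level with time, then ride the guaranteed trailing $c_{\cdot,i+1}$ block down to level $n$) is what your cascade would need to be replaced by.
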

\begin{proof}
First note that if $x_j\in V(c_{j,i})\setminus \{v_{j,0}\}$ then also $x_{j+1}\in V(c_{j+1,i})\setminus \{v_{j+1,0}\}$ and so by 
Remark~\ref{rem:star}
we must have $T(x)_{j+1}\in V(c_{j+1,i})\setminus
\{v_{j+1,0}\}$. As a consequence, by induction, we easily obtain that
$T^m(x)_{j+m}\in V(c_{j+m,i})\setminus \{v_{j+m,0}\}$. But occurrence of $c_{j+m,i}$ in $\phi_{j+m}(c_{j+m+1,i})$ is eventually followed by $c_{j+m,i+1}$
and clearly $c_{j+m,i+1}$ contains a subpath $\eta$ such that $\phi_{j+m,n}(\eta)=c_{n,i+1}$.
This immediately implies that there is an integer $r\geq 0$ such that $T^{m+r}(x)_n\in V(c_{n,i+1})$.
\end{proof}

\begin{cor}\label{Cor:asymptotic}
If $x,y\in X$ are an asymptotic pair then $|\deg(x)-\deg(y)|\leq 1$. In particular there is no point $x\neq p$
asymptotic with $p$.
\end{cor}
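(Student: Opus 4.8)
The plan is to read the asymptotic condition coordinatewise and then play the upper bound of Lemma~\ref{lem:deg:est} against the lower bound coming from $T$-invariance of the degree (Lemma~\ref{lem:def:const}). First I would record what asymptoticity means in the inverse-limit metric: since $d(a,b)=2^{-k}$ with $k=\min\{i:a_i\neq b_i\}$, a pair $x,y$ is asymptotic precisely when for every $n$ there is an $N$ such that $T^k(x)_n=T^k(y)_n$ for all $k\geq N$. In other words, for each fixed coordinate $n$ the $n$-th letters of the two orbits eventually agree.

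Now set $i=\deg(x)$ and $j=\deg(y)$, and assume without loss of generality that $i\leq j$. If $i=+\infty$ then $j=+\infty$ as well, so $x=y=p$ and there is nothing to prove; hence assume $i<\infty$ and fix any $n>i$. The key step uses Lemma~\ref{lem:deg:est}, which guarantees that for every starting index $m$ there is some $k\geq m$ with $\deg(T^k(x)_n)\leq i+1$; that is, such times $k$ occur arbitrarily late. On the other hand, Lemma~\ref{lem:def:const} iterated gives $\deg(T^k y)=\deg(y)=j$, and since $\deg(T^k y)=\min_r \deg(T^k(y)_r)$ we get $\deg(T^k(y)_n)\geq j$ for every $k$.

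To finish, I would combine these two facts at a common time. Choose $N$ so that $T^k(x)_n=T^k(y)_n$ for all $k\geq N$ (asymptoticity), and then use Lemma~\ref{lem:deg:est} with $m=N$ to pick some $k\geq N$ with $\deg(T^k(x)_n)\leq i+1$. For that particular $k$ the letters coincide, so $j\leq \deg(T^k(y)_n)=\deg(T^k(x)_n)\leq i+1$, i.e. $\deg(y)\leq \deg(x)+1$; by the symmetry of the asymptotic relation this yields $|\deg(x)-\deg(y)|\leq 1$. The ``in particular'' clause is then immediate from the same inequality: if $x\neq p$ then $\deg(x)=i<\infty$, so any point asymptotic to $x$ has degree at most $i+1<\infty$ and therefore cannot equal $p$, whose degree is $+\infty$.

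The only delicate point, and the step I would treat most carefully, is this synchronization: one must be sure that the time $k$ supplied by Lemma~\ref{lem:deg:est} can be taken \emph{after} the threshold $N$ dictated by asymptoticity, so that the upper bound $\deg(T^k(x)_n)\leq i+1$ and the coincidence $T^k(x)_n=T^k(y)_n$ hold simultaneously. This is exactly why Lemma~\ref{lem:deg:est} is phrased with a $\min_{j\geq m}$ for \emph{every} $m$ rather than for a single time, and taking $m=N$ closes the argument. Everything else is a direct bookkeeping of the degree along orbits.
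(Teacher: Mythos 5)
Your proposal is correct and follows essentially the same route as the paper's proof: fix $n>\deg(x)$, use asymptoticity to make the $n$-th coordinates of the orbits agree past some threshold, invoke Lemma~\ref{lem:deg:est} at a time beyond that threshold (the ``$\min_{j\geq m}$ for every $m$'' formulation being exactly the synchronization device you highlight), and transfer the bound to $\deg(y)$ via the orbit-invariance of degree from Lemma~\ref{lem:def:const}. The only cosmetic difference is bookkeeping: the paper pulls the bound $\deg(T^j(y))\leq i+1$ back to time zero by invariance, while you push $\deg(y)$ forward and use $\deg(T^k(y)_n)\geq\deg(T^k(y))$ --- the same inequality chain read in the other direction.
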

\begin{proof}
Suppose that $x\neq y$ and they are asymptotic. Then without loss of generality we may assume that $x\neq p$
and so $\deg(x)=i<\infty$. Fix any $n>i$. Since $x,y$ are asymptotic, there exists $m$
such that $T^j(x)_n=T^j(y)_n$ for all $j\geq m$. By Lemma~\ref{lem:deg:est} we obtain that $T^j(y)_n\leq i+1$
for some $j$ and so $\deg(T^j(y))\leq i+1$, which by Lemma~\ref{lem:def:const} gives $\deg(y)\leq i+1$.
In particular $\deg(y)<+\infty$ and hence also $y\neq p$. Therefore we can change the role of $x$ and $y$ obtaining by symmetric argument that $\deg(x)\leq \deg(y)+1$.
\end{proof}

\begin{lem}\label{lem:different:deg}
If $\deg(x)=i$ and $\deg(y)=i+1$ for some $i\in \N$ then the pair $x,y$ is not asymptotic.
\end{lem}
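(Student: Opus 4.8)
The plan is to prove the contrapositive form directly, namely that $\limsup_{k\to\infty} d(T^k(x),T^k(y))>0$, which is exactly the negation of being asymptotic. Since $d(T^k(x),T^k(y))\ge 2^{-n}$ as soon as $T^k(x)_n\neq T^k(y)_n$, it suffices to fix one level $n>i+1$ and produce arbitrarily large $k$ with $T^k(x)_n\neq T^k(y)_n$. The separating quantity I would use is the degree read off at this single level $n$. The claim about $y$ is immediate: by Lemma~\ref{lem:def:const} we have $\deg(T^k(y))=\deg(y)=i+1$ for all $k$, and since $\deg(z)=\min_s \deg(z_s)$ this forces $\deg(T^k(y)_n)\ge i+1$ for \emph{every} $k$. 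Hence $T^k(y)_n$ is either $v_{n,0}$ or lies on some $c_{n,j}$ with $j\ge i+1$; in particular $T^k(y)_n\notin V(c_{n,i})\setminus\{v_{n,0}\}$ for all $k$.

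The substantial half is the complementary statement for $x$: for this fixed $n$ there should be arbitrarily large $k$ with $T^k(x)_n\in V(c_{n,i})\setminus\{v_{n,0}\}$, i.e. $\deg(T^k(x)_n)=i$. I would prove this along the lines of Lemma~\ref{lem:deg:est}, but tracking $c_{n,i}$ instead of $c_{n,i+1}$. Let $N$ be such that $x_s\in V(c_{s,i})\setminus\{v_{s,0}\}$ for all $s\ge N$, and fix $j\ge N$. Exactly as in the proof of Lemma~\ref{lem:deg:est}, Remark~\ref{rem:star} and induction give $T^m(x)_{j+m}\in V(c_{j+m,i})\setminus\{v_{j+m,0}\}$ for every $m$. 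One then invokes the structural fact that $\phi_{j+m,n}(c_{j+m,i})$ contains a copy of $c_{n,i}$: indeed each $\phi_s(c_{s+1,i})=e_{s,0}+2c_{s,i}+\dots$ begins, after a base loop, with $c_{s,i}$ (and for $i=1$ the cycle $\phi_s(c_{s+1,1})$ likewise begins with $e_{s,0}+c_{s,1}$), so by downward induction the first non-base block of $\phi_{j+m,n}(c_{j+m,i})$ is $c_{n,i}$. Following the forward orbit from time $m$ while the walk at level $j+m$ completes a passage through $c_{j+m,i}$ then yields $T^{m+r}(x)_n\in V(c_{n,i})\setminus\{v_{n,0}\}$ for some $r\ge 0$; letting $m\to\infty$ produces the required times.

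Combining the two halves finishes the argument: for each of the infinitely many $k$ with $\deg(T^k(x)_n)=i$ we have $\deg(T^k(y)_n)\ge i+1\neq i$, so the vertices $T^k(x)_n$ and $T^k(y)_n$ have different degree and in particular differ. Thus $d(T^k(x),T^k(y))\ge 2^{-n}$ for arbitrarily large $k$, giving $\limsup_k d(T^k(x),T^k(y))\ge 2^{-n}>0$, and the pair $x,y$ is not asymptotic.

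The step I expect to be the main obstacle is the timing hidden in the second paragraph: I must guarantee that the copy of $c_{n,i}$ lying at the \emph{start} of $\phi_{j+m,n}(c_{j+m,i})$ is reached at a time $\ge m$, rather than already behind the current phase of the walk in $c_{j+m,i}$ at time $m$. In Lemma~\ref{lem:deg:est} this difficulty does not arise, because $c_{j+m,i+1}$ lies entirely after $c_{j+m,i}$ and is therefore automatically ahead in time; here I instead need a \emph{fresh} entry of the walk into a cycle of index $i$ after time $m$. I would arrange this by passing to a sufficiently high level $L$, where (since $\deg(x)=i$) the point genuinely sits on $c_{L,i}$, and using that the two copies of $c_{L,i}$ inside $\phi_L(c_{L+1,i})$ together with the re-entries forced at still higher levels supply such a fresh passage after any prescribed time; projecting this passage down to level $n$ then gives the required visit to $c_{n,i}$ at a time exceeding $m$.
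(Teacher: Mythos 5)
Your reduction and your first half are fine: by Lemma~\ref{lem:def:const} we have $\deg(T^k(y))=i+1$ for all $k$, so indeed $T^k(y)_n\notin V(c_{n,i})\setminus\{v_{n,0}\}$ for every $k$ and every level $n$; and to contradict asymptoticity it would suffice to find one level $n$ and infinitely many $k$ with $T^k(x)_n\neq T^k(y)_n$. The gap is exactly the step you flagged as the main obstacle, and it is not a repairable timing issue: the claim that for a fixed $n$ there are arbitrarily large $k$ with $T^k(x)_n\in V(c_{n,i})\setminus\{v_{n,0}\}$ is simply false. A degree-$i$ point need not ever revisit the index-$i$ cycle at a fixed level. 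To see this, construct $x$ upward: pick $x_S\in V(c_{S,i})\setminus\{v_{S,0}\}$ and inductively choose $x_{s+1}\in V(c_{s+1,i})$ with $\phi_s(x_{s+1})=x_s$ whose position along $c_{s+1,i}$ lands inside the \emph{last} block $c_{s,i}$ of the word $\phi_s(c_{s+1,i})$ (possible at every step, since that block is a complete copy of $c_{s,i}$ and so contains the vertex $x_s$). If $\ell_s$ is the time at which level $s$ finishes its current copy of $c_{s,i}$, then $\ell_s$ is strictly increasing, so at every time $k$ the levels with $\ell_s>k$ still sit on $c_{s,i}$ and $\deg(T^k(x))=i$, consistently with Lemma~\ref{lem:def:const}; but at level $n$, once the current suffix of $c_{n,i}$ is exhausted, the itinerary $\seq{T^k(x)_n}$ continues only through projections of blocks $c_{m,j}$ with $j\geq i+1$ and of $e_{m,0}$ (since after its last $c_{s,i}$ block each word $\phi_s(c_{s+1,i})$ continues with $2c_{s,i+1}+\dots$, and for $j\geq i+1\geq 2$ the traces $\phi_{m,n}(c_{m,j})$ contain no block $c_{n,i}$). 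The ``fresh passage supplied by re-entries at still higher levels'' that you hope for never materializes: the higher levels are indeed still on index-$i$ cycles, but inside the \emph{same}, already partially traversed copies, whose projections to level $n$ lie behind the current time rather than ahead of it.

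This is not an accident but the reason the paper is organized as it is. Lemma~\ref{lem:deg:est} deliberately asserts only $\min_{j\geq m}\deg(T^j(x)_n)\leq i+1$ — it tracks $c_{\cdot,i+1}$, which \emph{is} guaranteed to follow every copy of $c_{\cdot,i}$ inside $\phi_{\cdot}(c_{\cdot+1,i})$ — and not the bound $\leq i$ that your argument needs; had the stronger bound been available, asymptotic pairs would be forced to have equal degree and Corollary~\ref{Cor:asymptotic} would be trivially improvable, which should have been a warning sign. Indeed, the paper's own proof of this lemma begins by reducing (without loss of generality) to precisely the regime your claim excludes, namely $x_{i+1}^{j}\notin V(c_{i+1,i})$ for all $j\geq 0$, and the proof of Lemma~\ref{lem:deg:xeqy} likewise treats the case that $\seq{x_N^k}$ contains only finitely many complete copies of $c_{N,i}$ as a genuine possibility. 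Because no argument inspecting $x$ alone at a single level can succeed, the paper instead compares the \emph{relative} positions of the block structures of $x$ and $y$: using length estimates on the cycles it locates a time $j+r+n$ at which $x_{i+1}^{j+r+n}\in V(c_{i+1,i+1})$ while $y_{i+1}^{j+r+n}\notin V(c_{i+1,i+1})$, contradicting the assumed eventual agreement of the coordinates at level $i+1$. Some such interaction between the two orbits is unavoidable, and your proposal is missing it.
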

\begin{proof}
Assume to the contrary that $\deg(y)=\deg(x)+1$, but $x$ and $y$ are asymptotic and denote $i=\deg(x)$.
For each $m,s\geq 0$ and $z\in \{x,y\}$ write $z_m^s := \phi_{\infty,m} (T^s (z))$.
Note that by Lemma~\ref{lem:def:const} we have $y_{i+1}^j \not\in V(c_{i+1,i})$ for every $j\geq 0$ and so if $x,y$ are asymptotic, then $x_{i+1}^{j}\not\in V(c_{i+1,i})$
for all $j$ sufficiently large. Therefore without loss of generality we may assume that $x_{i+1}^{j}\not\in V(c_{i+1,i})$ for all $j\geq 0$.
Fix any $n>i+1$ sufficiently large to have $\phi_{\infty,k}(x)\in V(c_{k,i})$ and $\phi_{\infty,k}(y)\in V(c_{k,i+1})$ for all $k\geq n-1$.
We also assume that $n$ is large enough to have $x_{i+1}^{j}=y_{i+1}^{j}$ for all $j\geq n$.

We can present a path induced on the graph $G_n$ by the orbit of $x$ by
$$
\seq{ x_{n}^{j} }_{j=0}^\infty =\eta + 2c_{n,i+1}+\ldots
$$
where $\eta$ is a path which does not contain complete cycle $c_{n,i+1}$ as a subpath. Clearly $\eta$ cannot contain complete cycle $c_{n,i}$ because otherwise $x_{i+1}^{j}\in V(c_{i+1,i})$ for some $j\geq 0$ which is a contradiction.
Therefore either $|\eta|\leq |c_{n,i+1}|$ or
$$\phi_{n-1}(\eta)=\eta'+2c_{n-1,i+1}+\ldots+2c_{n-1,n-1}+e_{n-1,0}$$
where $\eta'$ is a suffix of $c_{n-1,i}$, hence in this case we obtain $|\eta|\leq |\eta'|+|c_{n,i+1}|$.
Repeating the above arguments inductively, and keeping in mind that $\phi_{\infty,i+1}(x)\not\in V(c_{i+1,i})$ we obtain that
\begin{equation}
|\eta|\leq \sum_{r=1}^n |c_{r,i+1}|\leq |c_{n,i+1}|+ n |c_{n-1,i+1}|\leq |c_{n,i+1}|+|c_{n,n}|-n.\label{eta:estimate}
\end{equation}
Since $\phi_{\infty,n-1}(x)\in V(c_{n-1,i})$ we also have that $|\eta|\geq |c_{n,i+1}|$.

On the other hand, we must have $y_{i+1}^{j}\in V(c_{i+1,i+1})$ for infinitely many iterates $j$, as otherwise it is again easy to see that $x,y$ are not asymptotic.
Hence we may assume that $n$ is sufficiently large to ensure that
$$
\seq{y_{n}^{j}}_{j=0}^\infty =\xi + c_{n,i+1}+2c_{n,i+2}+\ldots + 2c_{n,n}+\ldots
$$
where $\xi$ is a suffix of $c_{n,i+1}$.
Let $j>0$ be the smallest integer such that $y_{n}^{j} \in \bigcup_{s=i+2}^nV(c_{n,s})$. Since $\phi_{\infty,n-1}(y)\in V(c_{n-1,i+1})$ we have $|\xi|> |c_{n,n}|$
and hence
$$n\leq |\eta|<j \leq  2|c_{n,i+1}|\leq |\eta|+|c_{n,i+1}|.$$
Roughly speaking, it means that $x_{n-1}^{j}$ belongs to the first occurrence of $c_{n,i+1}$ on the path $\eta+2c_{n,i+1}+\ldots$.
By the choice of $j$ we have $y_{i+1}^{j}\not\in V(c_{i+1,i+1})$ and $x_{i+1}^{j}=y_{i+1}^{j}$.
Then calculations similar to \eqref{eta:estimate} yield that there is $r< 2|c_{n,i+2}|$ such that $x_{n}^{j+r}$ is the first vertex of $c_{n,i+1}$.
This implies that $x_{i+1}^{j+r+n}\in V(c_{i+1,i+1})$ while $r+n\leq 2|c_{n,i+2}|+|c_{n,n}|$, hence we still have $y_{n}^{j+r+n}\in \bigcup_{s=i+2}^nV(c_{n,s})$
and as a consequence $y_{i+1}^{j+r+n}\not\in V(c_{i+1,i+1})$.
%
This shows that $x,y$ are not asymptotic which is a contradiction.
\end{proof}

It remains to consider the case when $x$ and $y$ have distinct finite degrees.
\begin{lem}\label{lem:overlapping}
	Let $\deg(x)=\deg(y)=i$ for some $i\in \N$ and assume that there are $n>i+2$ and $k\geq 1$ such that
	$$
	\seq{\phi_{\infty,n}(T^j(x))}_{j=k}^\infty=\seq{\phi_{\infty,n}(T^j(y))}_{j=k}^\infty=c_{n,i}+\ldots.
	$$
	Then $\seq{\phi_{\infty,n}(T^j(x))}_{j=0}^\infty=\seq{\phi_{\infty,n}(T^j(y))}_{j=0}^\infty$.
\end{lem}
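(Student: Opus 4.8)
The plan is to extend the agreement of the two level-$n$ orbit paths backward from time $k$ to time $0$ by a downward induction, isolating the one place where the combinatorics can branch. Writing $x_n^j=\phi_{\infty,n}(T^j(x))$ and $y_n^j=\phi_{\infty,n}(T^j(y))$ as in the proof of Lemma~\ref{lem:different:deg}, I first record a structural fact: since the non-base vertices of the cycles $c_{n,1},\dots,c_{n,n}$ are pairwise distinct, every edge of $G_n$ lies either on the loop $e_{n,0}$ or on exactly one cycle $c_{n,s}$, and correspondingly every non-base vertex of $G_n$ has a \emph{unique} incoming edge. Hence the level-$n$ path traced by any orbit is a concatenation of \emph{atoms}, each atom being one pass of $e_{n,0}$ or of a single cycle $c_{n,s}$, with $v_{n,0}$ occurring exactly at the atom junctions. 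Now let $j_0$ be the least index with $x_n^{j}=y_n^{j}$ for all $j\geq j_0$. Whenever $x_n^{j_0}=y_n^{j_0}\neq v_{n,0}$ the unique incoming edge forces $x_n^{j_0-1}=y_n^{j_0-1}$, so a first disagreement can occur only at a junction, i.e.\ when $x_n^{j_0}=y_n^{j_0}=v_{n,0}$. Thus the whole problem reduces to determining the atom terminating at such a junction and showing it is the same for both orbits.

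Second, to resolve a junction I would prove a \emph{forward recognizability} statement: for a point of degree $i$, the common forward level-$n$ path emanating from a $c_{n,i}$-junction already determines the entire bi-infinite level-$n$ path, hence in particular the atom immediately preceding each junction. Here the precise design of the covers is exactly what is needed. Reading the rules of Section~\ref{sec:constr} as a substitution on atoms, $e_{n+1,0}\mapsto e_{n,0}$ and $c_{n+1,s}\mapsto \phi_n(c_{n+1,s})$, the three families of images are mutually distinguishable: $\phi_n(c_{n+1,n+1})$ is a block of $e_{n,0}$ of length $(n+2)^2\sum_{s=1}^n|c_{n,s}|$, far longer than any run of loops produced by the other rules; $\phi_n(c_{n+1,1})$ carries the unmistakable pattern of copies of $c_{n,1}$ separated by loop-runs of strictly increasing lengths $2,4,\dots,k_n$; and each $\phi_n(c_{n+1,s})$ with $2\leq s\leq n$ is the staircase $e_{n,0}+2c_{n,s}+\dots+2c_{n,n}+e_{n,0}$ beginning with its own lowest cycle $c_{n,s}$. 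Using the hypothesis $n>i+2$ to guarantee enough room below level $n$, these markers let me parse the common forward word uniquely into level-$(n+1)$ atoms, locate each junction inside a unique level-$(n+1)$ atom, and thereby pin down the preceding level-$n$ atom as a function of the common forward data alone.

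Once the preceding atom is forced in this way the induction closes: the backward atom sequences of $x$ and $y$ coincide, and since the two paths are anchored together at the $c_{n,i}$-junction at time $k$, their phases match automatically, giving $x_n^j=y_n^j$ for all $j\geq 0$. The ascent in level used in the parsing is legitimate because $T^{j_0}(x)$ and $T^{j_0}(y)$ again have degree $i$ by Lemma~\ref{lem:def:const}, so their coordinates are genuinely interior to $c_{m,i}$ for all sufficiently large $m$; hence from any junction one eventually reaches a level at which the relevant vertex has a unique predecessor, and the procedure terminates. I expect the main obstacle to be exactly this recognizability step, namely ruling out spurious parsings of the common forward word: this requires a somewhat tedious case analysis of how the $c_{n,i}$-atoms can be nested inside the images $\phi_n(c_{n+1,s})$, together with careful bookkeeping of the lengths $k_n$ and $(n+2)^2\sum_{s}|c_{n,s}|$ that render the three families of expansions unambiguous. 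Everything surrounding it—the atom decomposition, the uniqueness of incoming edges, and the phase-matching—is routine.
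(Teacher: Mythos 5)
Your reduction is sound as far as it goes: in $G_n$ every vertex other than $v_{n,0}$ has a unique incoming edge, so a first backward disagreement can only occur when both paths sit at $v_{n,0}$, and the lemma reduces to identifying the atom completed at such a junction. This matches the skeleton of the paper's argument. But your proposal stops exactly where the actual proof begins. The ``forward recognizability'' statement you invoke --- that the common forward level-$n$ word emanating from the $c_{n,i}$-anchor determines the entire bi-infinite level-$n$ path --- is, modulo invertibility of $T_\G$, a restatement of the lemma itself (apply the lemma to $T^{-l}x$, $T^{-l}y$ for each $l$); asserting it because the three families of images are ``mutually distinguishable'' comes close to assuming the conclusion. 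The genuine difficulty, which occupies the paper's displays \eqref{xandy:1}--\eqref{xandy:4}, is that $x$ and $y$ each carry their own decomposition of the \emph{same} level-$n$ word into images of level-$m$ atoms, and these decompositions may be relatively shifted. At $m=n+1$ a shift contradicts the common tail $\gamma_n$ directly, but the paper notes explicitly that for $m>n+1$ no immediate contradiction arises; one needs the two-case analysis \eqref{xandy:3}/\eqref{xandy:4}, which locates forced occurrences of $c_{n,i+1}$ and of the last vertex of $c_{n,i}$ inside $\gamma_n$, followed by a termination argument: if the decompositions never align, the prefix lengths $|\eta_m|$ must decrease strictly in $m$, which is impossible. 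A one-level inspection of markers (the long loop block of $\phi_n(c_{n+1,n+1})$, the staircases, the increasing loop runs --- which, incidentally, have lengths $1,2,\dots,k_n$, not $2,4,\dots,k_n$) does not substitute for this unbounded, quantitative induction across levels; your proposal flags it as ``tedious case analysis'' but carries out none of it.

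Your termination argument is also insufficient as stated. You observe that since $\deg(T^{j_0}(x))=i$, for large $m$ the coordinate $\phi_{\infty,m}(T^{j_0}(x))$ is interior to $c_{m,i}$ and hence has a unique predecessor in $G_m$. True --- but this determines $x$'s predecessor from $x$'s own high-level coordinate and $y$'s from $y$'s, and nothing in your argument shows these high-level coordinates (or the $\phi_{m,n}$-images of the two predecessors) coincide: the hypothesis gives agreement only at level $n$. Forcing the two high-level decompositions to align over the common level-$n$ data is precisely the recognizability step you left unproved. So the proposal correctly isolates the crux of the lemma but establishes only the routine part (the atom decomposition, unique incoming edges off the base vertex, and the reduction to junctions); the paper's shift-and-bookkeeping induction, or an equivalent, is still missing.
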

\begin{proof}
For each $m,s\geq 0$ and $z\in \{x,y\}$ write $z_m^s := \phi_{\infty,m} (T^s (z))$. Then sequence $\seq{z^s_m}_{s=0}^\infty$ is an infinite path on $G_m$.
By assumption, there exist finite paths $\eta_n,\xi_n$, $|\eta_n|=|\xi_n|$ and an infinite path $\gamma_n$ on $G_n$ such that
\begin{equation}\label{eq:equal:xy}
\seq{x^s_n}_{s=0}^\infty=\eta_n+c_{n,i}+\gamma_n,\quad \seq{y^s_n}_{s=0}^\infty=\xi_n+c_{n,i}+\gamma_n.
\end{equation}
We claim that either $\eta_n=\xi_n$ or for every $m>n$ there exist finite paths $\eta_m,\xi_m$, $|\eta_m|=|\xi_m|$ such that
\begin{enumerate}
	\item $\seq{x^s_m}_{s=0}^\infty=\eta_m+c_{m,i}+\ldots$, $\seq{y^s_m}_{s=0}^\infty=\xi_m+c_{m,i}+\ldots$,
	\item $|\eta_m|+|c_{m,i}|>|\eta_{m-1}|+|c_{m-1,i}|$,
	\item\label{con:dec:m} $|\eta_m|<|\eta_{m-1}|$.
\end{enumerate}
Let us assume that $\eta_n\neq \xi_n$. This implies in particular that $x^0_m\neq y^0_m$ for every $m>n$.
Observe that the cycle $c_{n,i}$ in $\seq{x^s_n}_{s=0}^\infty$ must be contained in an image of (possibly a part of) cycle $c_{n+1,i}$ in $\seq{x^s_n}_{s=0}^\infty$
(the same is true for $\seq{y^s_n}_{s=0}^\infty$).
Picture these images written one over another as in \eqref{xandy:1}.
\begin{eqnarray}
\seq{x_{n}^s}_{s=0}^\infty &= & \overbrace{\ldots\ldots   +c_{n,i}+ \ldots}^{c_{n+1,i}} \ldots\ldots \nonumber \\
\seq{y_{n}^s}_{s=0}^\infty &=&  \ldots \underbrace{\ldots  +c_{n,i}+ \ldots\ldots}_{c_{n+1,i}}\ldots \label{xandy:1}
\end{eqnarray}
If these images are shifted, then moving to the right from $c_{n,i}$ the first vertex of $c_{n,i+1}$ must appear before trace of $\phi_{n}(c_{n+1,i})$ finish in each of these sequences, and it will appear earlier in
$\seq{y_{n}^s}_{s=0}^\infty$ than in $\seq{x_{n}^s}_{s=0}^\infty$ or vice-versa (the first case occurs if the situation is as in \eqref{xandy:1}). It leads to a contradiction with definition of $\gamma_n$.
On the other hand, whole $c_{n+1,i}$ must appear in $\seq{x_{n+1}^s}_{s=0}^\infty$ because $x_{n+1}^0 \neq y_{n+1}^0$. This proves the claim for $m=n+1$.

For the proof of the case $m>n+1$, we can repeat previous observations, however now relative shift in images of $c_{m,i}$ in paths
\begin{eqnarray}
\seq{x_{m-1}^s}_{s=0}^\infty &= & \overbrace{\ldots\ldots   +c_{m-1,i}+ \ldots}^{c_{m,i}} \ldots\ldots \nonumber \\
\seq{y_{m-1}^s}_{s=0}^\infty &=&  \ldots \underbrace{\ldots  +c_{m-1,i}+ \ldots\ldots}_{c_{m,i}}\ldots \label{xandy:2}
\end{eqnarray}
does not lead to immediate contradiction, because we cannot use $\gamma_n$ directly in this case. By the symmetry of argument, we may assume that the relative shift is as in
\eqref{xandy:2}. As before,  moving to the right from $c_{m-1,i}$ in both paths on $G_{m-1}$, the first occurrence of a vertex of $c_{m-1,i+1}$ will appear earlier in
$\seq{x_{m-1}^s}_{s=0}^\infty$ than in $\seq{y_{m-1}^s}_{s=0}^\infty$. Note that trace of $\phi_{m-1,n}(c_{m-1,i+1})$ in $\seq{x_n^s}_{s=0}^\infty$ contains a copy of cycle $c_{n,i+1}$ in $\gamma_n$, 
hence $c_{n,i+1}$ will also appear in $\seq{y_{n}^s}_{s=0}^\infty$. But we know that it cannot be obtained as image of $c_{m-1,i+1}$ (it is a too early position to see a vertex of $c_{m-1,i+1}$), hence the only possibility is that it is a part of image $\phi_{m-1,n}(c_{m-1,i})$.
But then at level $m-2$ we will see one of the following two paths:
\begin{eqnarray}
\seq{y_{m-2}^s}_{s=0}^\infty &= & \overbrace{\ldots  \ldots+c_{m-2,m-2}+e_{m-2,0}+c_{m-2,i+1}+\ldots}^{c_{m,i}} \ldots\ldots \nonumber \\
\seq{x_{m-2}^s}_{s=0}^\infty &=&  \ldots \underbrace{.\,\ldots \ldots\ldots\ldots+c_{m-2,i}+c_{m-2,i+1}+\ldots\ldots}_{c_{m,i}}\ldots \label{xandy:3}
\end{eqnarray}
or
\begin{eqnarray}
\seq{x_{m-2}^s}_{s=0}^\infty &= & \overbrace{\ldots\ldots   +c_{m-2,i+1}+c_{m-2,i+1}+ \ldots}^{c_{m,i}} \ldots\ldots. \nonumber \\
\seq{y_{m-2}^s}_{s=0}^\infty &=&  \ldots \underbrace{\ldots  +c_{m-2,i+1}+c_{m-2,i+2}+ \ldots\ldots}_{c_{m,i}}.\ldots \label{xandy:4}
\end{eqnarray}
Note that in the case \eqref{xandy:3} if we move left from $c_{m-2,i+1}$ then in $\seq{y_{m-2}^s}_{s=0}^\infty$ we will reach the first position $r$ (within $c_{m-2,i}$)
which $\phi_{m-2,n}(x^r_{m-2})$ pointing at the last vertex of $c_{n,i}$ while $x_n^r\not\in V(c_{n,i})$. `But the position $r$ occurs after the start of $\gamma_n$ which is a contradiction.

In the case \eqref{xandy:4}, now moving to the right from $c_{m-2,i+1}$ in $\seq{x_{m-2}^s}_{s=0}^\infty$
will eventually reach the first position $r$ (within $c_{m-2,i+1}$)
which $\phi_{m-2,n}$ sends to the first vertex of $c_{n,i+1}$, while still $y_n^r\in V(c_{n,i+1})$. A contradiction again. The claim is proved.

To complete the proof, note that the condition \eqref{con:dec:m} of the claim cannot be satisfied for infinitely many $m$, hence the only possibility is that $\eta_n=\xi_n$
and consequently $\seq{x_{n}^s}_{s=0}^\infty=\seq{y_{n}^s}_{s=0}^\infty$.
\end{proof}

\begin{lem}\label{lem:deg:xeqy}
Let $\deg(x)=\deg(y)=i$ for some $i\in \N$. If the pair $x,y\in X$ is asymptotic, then  $x=y$.
\end{lem}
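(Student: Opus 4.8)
The plan is to deduce the statement from Lemma~\ref{lem:overlapping}. That lemma says that, as soon as the level-$n$ orbit paths of $x$ and $y$ eventually coincide and the common tail begins with a full copy of $c_{n,i}$, the two paths must in fact coincide from time zero. So the whole proof reduces to producing, for every sufficiently large $n$, a single time $k\geq 1$ at which the hypotheses of Lemma~\ref{lem:overlapping} hold; once I have $\seq{\phi_{\infty,n}(T^j(x))}_{j=0}^\infty=\seq{\phi_{\infty,n}(T^j(y))}_{j=0}^\infty$, reading off the $j=0$ term gives $x_n=y_n$, and since $x_m=\phi_{n,m}(x_n)$ and $y_m=\phi_{n,m}(y_n)$ for $m\leq n$, letting $n\to\infty$ yields $x_m=y_m$ for all $m$, i.e. $x=y$.

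To set up the hypotheses I would fix the data as follows. Since $x,y$ are asymptotic, for each fixed $n$ there is $M$ with $\phi_{\infty,n}(T^j(x))=\phi_{\infty,n}(T^j(y))$ for all $j\geq M$, so the level-$n$ tails from time $M$ automatically coincide. Since $\deg(x)=i<\infty$ there is $N$ with $x_m\in V(c_{m,i})\setminus\{v_{m,0}\}$ for all $m\geq N$, and I fix any $n>\max\{i+2,N\}$. It then remains to locate a time $k\geq M$ at which the level-$n$ orbit of $x$ (equivalently, since $k\geq M$, of $y$) stands at $v_{n,0}$ and proceeds to traverse the whole cycle $c_{n,i}$, so that the common tail from $k$ has the required form $c_{n,i}+\ldots$.

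The heart of the matter is the combinatorial fact that the level-$n$ orbit of $x$ runs through a complete copy of $c_{n,i}$ at arbitrarily late times. This is forced by the construction: at time $0$ every coordinate $x_m$ with $m\geq N$ lies on $c_{m,i}$, so the whole tower sits inside the $c_{\cdot,i}$-column, and the defining formula $\phi_n(c_{n+1,i})=e_{n,0}+2c_{n,i}+2c_{n,i+1}+\ldots+2c_{n,n}+e_{n,0}$ (and the analogous formula for $i=1$) begins with $e_{n,0}+c_{n,i}$ and carries each complete passage of $c_{n+1,i}$ at level $n+1$ to at least one complete passage of $c_{n,i}$ at level $n$, entered cleanly at $v_{n,0}$. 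Pushing this up the tower shows such complete passages of $c_{n,i}$ occur infinitely often, hence some entry time $k\geq M$ exists. Applying Lemma~\ref{lem:overlapping} with this $n$ and $k$ then gives $x_n=y_n$, and the reduction above finishes the proof. I expect the delicate point to be exactly this last combinatorial bookkeeping: making precise, via the projection formulas and the fact that every higher coordinate stays on its $c_{\cdot,i}$ cycle, that a full traversal of $c_{n,i}$ beginning at $v_{n,0}$ really does occur after the asymptotic threshold $M$, rather than the orbit only grazing the cycles $c_{n,i+1},c_{n,i+2},\dots$ allowed a priori by Lemma~\ref{lem:deg:est}.
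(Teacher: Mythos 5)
There is a genuine gap, and it is precisely at the point you flag as ``combinatorial bookkeeping'': the claim that the level-$n$ itinerary of $x$ performs a complete traversal of $c_{n,i}$ (entered at $v_{n,0}$) at arbitrarily late times is not delicate but false in general. Writing $x^s_n=\phi_{\infty,n}(T^s(x))$, the forward path $\seq{x^s_m}_{s=0}^\infty$ at level $m$ begins with only the \emph{suffix} of $c_{m,i}$ starting at the vertex $x_m$, and nothing forces the orbit ever to re-enter $c_{n,i}$ from its start. Concretely, since $\phi_m(c_{m+1,i})=e_{m,0}+2c_{m,i}+2c_{m,i+1}+\cdots+2c_{m,m}+e_{m,0}$, one can choose a coherent thread in which $x_{m+1}$ always lies over the \emph{second} copy of $c_{m,i}$: if $p_m$ denotes the offset of $x_m$ inside $c_{m,i}$, the choice $p_{m+1}=1+|c_{m,i}|+p_m$ is compatible with all the bonding maps and defines a point of $V_\G$ with $\deg(x)=i$. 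For such a point the level-$n$ path consists of one proper suffix of $c_{n,i}$ followed exclusively by projections of cycles $c_{m,j}$ with $j>i$, and an easy induction shows $\phi_{m,n}(c_{m,j})$ contains only cycles $c_{n,j'}$ with $j'\geq j>i$. Hence no complete copy of $c_{n,i}$ ever occurs in $\seq{x^s_n}_{s=0}^\infty$, the hypothesis of Lemma~\ref{lem:overlapping} is unattainable at every time $k$, and your entire reduction collapses for these points. Your stated justification --- that each complete passage of $c_{n+1,i}$ projects to a complete passage of $c_{n,i}$ --- is true but irrelevant, because the orbit need never complete a passage of $c_{n+1,i}$ at any level.

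This is exactly why the paper's proof is a two-case analysis, of which your proposal reproduces only the first. When $\seq{x^k_N}_{k=0}^\infty$ contains infinitely many complete copies of $c_{N,i}$, the paper argues as you do: some copy occurs after the merging time $K$, Lemma~\ref{lem:overlapping} applies, and $x_N=y_N$ contradicts the normalization $x_n\neq y_n$ for $n\geq N$. In the complementary case it assumes (after enlarging $N$) that no complete $c_{n,i}$ appears at any level $n\geq N$, so both paths begin with suffixes $\eta_n,\xi_n$ of $c_{n,i}$ of lengths tending to infinity; asymptoticity then forces the sequence $\seq{||\eta_n|-|\xi_n||}_{n}$ to be bounded, which yields $T^m(x)=y$ for some integer $m$, and $m\neq 0$ is excluded because it would make $x$ a periodic point distinct from $p$, contradicting Lemma~\ref{Lem:OnlyPeriodic}. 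You have no substitute for this second branch --- the rigidity statement ``$y$ is a time-shift of $x$'' and the absence of nontrivial periodic points --- so as written your argument proves the lemma only for pairs whose itineraries are of the first kind.
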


\begin{proof}
Let us choose an asymptotic pair $x,y\in X$, $x\neq y$ for which  $\deg(x)=\deg(y)=i$ for some $i\in \N$.
There exists an integer $N> i+2$, for which $\deg (x_n) = \deg (y_n) = i$ for all $n\geq N$. We may also assume that $x_n\neq y_n$ for all $n\geq N$.	
As before, for each $m,s\geq 0$ and $z\in \{x,y\}$ write $z_m^s := \phi_{\infty,m} (T^s (z))$.
Since the pair $x,y$ is asymptotic, there exists $K>0$ such that
$x_N^k = y_N^k$ for all $k\geq K$.
If the sequence $\seq{x_N^k}_{k=0}^\infty$ contains infinitely many copies of $c_{N,i}$ then we may apply Lemma~\ref{lem:overlapping}
obtaining that $x_N^k = y_N^k$ for every $k\geq 0$, in particular $x_N=y_N$, which is a contradiction.
Hence the only possibility is that $\seq{x_N^k}_{k=0}^\infty$ contains finitely many copies of $c_{N,i}$. Note that each complete cycle $c_{n,i}$ appering in $\seq{x_n^k}_{k=0}^\infty$ induces at least two copies of cycle $c_{n-1,i}$
in $\seq{x_{n-1}^k}_{k=0}^\infty$, hence without loss of generality we may assume that for all $n\geq N$, complete cycle $c_{n,i}$ does not appear neither in $\seq{x_n^k}_{k=0}^\infty$ nor in $\seq{y_n^k}_{k=0}^\infty$.
Note that $x_n, y_n\in c_{n,i}$ for all $n\geq N$ and for each $n$ there exist suffixes $\eta_n$ and $\xi_n$ of $c_{n,i}$, starting with vertex $x_n$ and $y_n$ respectively, which is a prefix of path $\seq{x_n^k}_{k=0}^\infty$ and $\seq{y_n^k}_{k=0}^\infty$. First observe that by the definition of $c_{n,i}$ and $\phi_n$ we must have
$$
\lim_{n\to\infty}|\eta_n|=\lim_{n\to\infty}|\xi_n|=+\infty.
$$
We may also assume that $|\eta_n|<|\xi_n|$ for infinitely many $n$. If $\limsup_{n\to\infty}||\eta_n|-|\xi_n||=+\infty$ then  for each $M$ there are $n>N$ and
$j>M$ such that $x_N^j=\phi_{n,N}(x_n^j)=y_N\in V(c_{N,i})$ or $y_N^j=\phi_{n,N}(y_n^j)=x_N\in V(c_{N,i})$ which is a contradiction.
We obtain that the sequence $\seq{||\eta_n|-|\xi_n||}_{n=N}^\infty$ is bounded, and therefore there exists an integer $m\in \Z$ such that $T^m (x) = y$.
If $m=0$ then we are done, so without loss of generality assume that $m\neq 0$, which implies that $x$ is a periodic point.
But $\deg(x)<\infty$, so $x\neq p$ contradicts Lemma~\ref{lem:prox}. The proof is completed.
\end{proof}

\section{Applications}
\subsection{Proof of Theorem~\ref{main}}

Consider the dynamical system $(X,T)$(=$(V_\G,T_\G)$) constructed in Section~\ref{sec:constr} and denote by $p$ its unique (fixed) point with $\deg(p)=+\infty$. By Corollary~\ref{cor:prox} $(X,T)$ is proximal, and by Lemma~\ref{lem:mixing}
it is topologically mixing. Take any $x,y\in X$ which are asymptotic. By Corollary~\ref{Cor:asymptotic} and Lemma~\ref{lem:different:deg} we have either $x=y=p$ or there is $i\in \N$
such that $\deg(x)=\deg(y)=i$. By Lemma~\ref{lem:deg:xeqy} we obtain that also in the second case $x=y$.
This shows that when $x\neq y$ then they are proximal but not asymptotic. Indeed $(X,T)$ is completely scrambled.

\subsection{Solution to Question~\ref{que:1}}\label{sec:que}

Let $(X,T)$ be a transitive completely scrambled homeomorphism acting on a compact metric space $X$ of topological dimension $n\geq 0$.
We will show how to extend it to a system $(Y,S)$ with connected $Y$ of topological dimension $n+1$. Note that the case of $(X,T)$ for $n=0$ is provided by Theorem~\ref{main}, hence by induction using the method described
below, we can construct it for any $n>0$. This will solve Question~\ref{que:1}.

Let $\seq{\Phi_t}_{t\in \R}$ be the suspension flow of $(X,T)$ acting on a corresponding space $Z$ obtained as a respective quotient of $X\times [0,1]$. Since $(X,T)$ is transitive, it is not hard to verify that
there is an irrational $s>0$ such that the homeomorphism $R=\Phi_s\colon Z\to Z$ is transitive (in fact there is a residual set of parameters $t\in \R$ for which $\Phi_t$ is transitive; e.g. see the first part of the proof of \cite[Proposition~4]{Fayad}).
Since there is no asymptotic pair in $(X,T)$ it is also clear that $(Z,R)$ does not contain asymptotic pairs. Let $p$ be a fixed point which is the unique minimal set for $(X.T)$. Then it is not hard to see that the set $M=\omega_R(p,0)$
is a minimal set for $(Z,R)$, homeomorphic to the unit circle. Now fix any $(x,r)\in Z$. Since $(p,x)$ are proximal in $(X,T)$, for every $m>0$ and open neighborhood $V$ of $p$ there is $i>0$ such that $T^{i+j}(x)\in V$
for all $j=0,\ldots,m$. But $m$ can be arbitrarily large, so for every neighborhood $U$ of $(p,0)$ there exists $n>0$ such that $R^n(x,q)\in U$, so in particular $(p,0)\in \omega_R(x,r)$.
This proves that $M$ is the unique minimal set for $(Z,R)$. Let $Y$ be obtained from $Z$ by collapsing $M$ to a point. This induces in a natural way a factor $(Y,S)$ with a factor map $\pi\colon (Z,R)\to (Y,S)$.
Denote by $q$ the unique fixed point of $(Y,S)$ defined by $\{q\}=\pi(M)$. It is obvious that there is no other minimal subset in $(Y,S)$, hence $(Y,S)$ is proximal (e.g. see \cite{Akin}).
Additionally observe that there is no nontrivial asymptotic pair in $(Y,S)$. Since there is no asymptotic pair in $(Z,R)$ consisting of distinct points, the only possibility could be that there is $y\in Y$, $y\neq q$ asymptotic to $q$.
But then there exists $(z,r)\in Z$ such that $\omega_R(z,t)=\omega_R(p,0)$, which in turn implies that $\omega_T(z)=p$ which is a contradiction.

We have just proved that $(Y,S)$ is proximal without nontrivial asymptotic pairs, which equivalently means that $(X,S)$ is completely scrambled. It follows directly from the construction
that $Y$ is of topological dimension $n+1$. We have proved the following.
\begin{thm}
For every $n\geq 1$ there exists a continuum $X$ of dimension $n$ supporting a completely scrambled transitive homeomorphims.
\end{thm}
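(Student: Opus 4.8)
The plan is to argue by induction on $n$, each step raising topological dimension by one while preserving the property of being a transitive homeomorphism that is completely scrambled. The base of the induction is the zero-dimensional case furnished by Theorem~\ref{main}, namely the mixing (hence transitive) completely scrambled homeomorphism on the Cantor set. Throughout I would exploit the fact that being completely scrambled is equivalent to being proximal together with having no nontrivial asymptotic pair. So, given a transitive completely scrambled homeomorphism $(X,T)$ with $\dim X = n \geq 0$, the inductive step must produce a system $(Y,S)$ on a continuum $Y$ with $\dim Y = n+1$ enjoying these same two properties.

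First I would pass to the suspension flow $\seq{\Phi_t}_{t\in\R}$ over $(X,T)$, carried on the quotient space $Z$ of $X\times[0,1]$. Since $(X,T)$ is transitive, $Z$ is a continuum, and the mapping-torus construction raises dimension by exactly one, so $\dim Z = n+1$. To replace the flow by a single homeomorphism I would select an irrational $s>0$ for which the time-$s$ map $R=\Phi_s$ is transitive; this is possible because transitivity of the base flow forces a residual set of such parameters. Two features then descend from the base. No nontrivial asymptotic pair appears in $(Z,R)$, since any such pair would yield an asymptotic pair of distinct points for $(X,T)$, contradicting complete scrambling. Moreover, writing $p$ for the unique $T$-minimal (fixed) point, the set $M=\omega_R(p,0)$ is a minimal set homeomorphic to the unit circle; using proximality of $(X,T)$ (Corollary~\ref{cor:prox}) and the fact that $p$ lies in every $\omega$-limit set (Lemma~\ref{lem:prox}), I would verify that $(p,0)\in\omega_R(x,r)$ for every $(x,r)\in Z$, so that $M$ is the unique minimal set and $(Z,R)$ is proximal.

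Finally I would collapse $M$ to a single point, obtaining a factor map $\pi\colon(Z,R)\to(Y,S)$ with $Y$ a continuum, $\dim Y=n+1$ by a direct dimension count, and $q=\pi(M)$ its unique fixed point. Transitivity and proximality pass to factors, so $(Y,S)$ is transitive and $q$ is its only minimal set. The one genuinely delicate step, and the point I expect to be the main obstacle, is to rule out nontrivial asymptotic pairs after the collapse: since distinct points of $Z$ are never asymptotic, the only remaining threat is a point $y\neq q$ asymptotic to $q$. Such a $y$ would lift to some $(z,r)\in Z$ with $\omega_R(z,r)=M$, forcing $\omega_T(z)=\{p\}$ and hence $z$ to be asymptotic to $p$ in $(X,T)$, which is impossible. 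Thus $(Y,S)$ is proximal with no nontrivial asymptotic pair, i.e.\ completely scrambled and transitive, and the induction closes, giving the desired continuum of each dimension $n\geq 1$.
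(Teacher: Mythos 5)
Your proposal is correct and follows essentially the same route as the paper: induction starting from Theorem~\ref{main}, passing to the suspension flow, choosing an irrational time $s$ so that $R=\Phi_s$ is transitive, identifying the circle $M=\omega_R(p,0)$ as the unique minimal set via proximality of the base, collapsing $M$ to a point, and ruling out a point asymptotic to $q$ by the lift argument forcing $\omega_T(z)=\{p\}$. Even your flagged ``delicate step'' is handled exactly as in the paper, so there is nothing further to add.
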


\subsection{Proof of Theorem~\ref{main:2}}
First fix any $n\geq 2$.
Apply Theorem~\ref{main} to obtain a mixing completely scrambled homeomorphism $(X,T)$ where $X$ is a Cantor set (strictly speaking, it is the dynamical system $(V_\G,T_\G)$ constructed in Section~\ref{sec:constr}).
On the other hand, Glasner and Maon, inspired by an earlier work of Glasner and Weiss \cite{GW}, provided in 1989 a method of construction of minimal, weakly mixing and uniformly rigid homeomorphisms \cite[Proposition~6.5]{Maon}.
As a particular example of application of this techniques we obtain a uniformly rigid, weakly mixing and minimal dynamical system $(Z,R)$ where $Z=\mathbb{T}^n$ is $n$-dimensional torus.
Denote by $p$ the unique fixed point of $(X,T)$.
Let $(Y,S)$ be a factor of $(X\times Z, T\times R)$ obtained by collapsing minimal set $M=\{p\}\times Z$ to a point $q\in Y$, and let $\pi \colon X\times Z, T\times R) \to (Y,S)$ be a corresponding factor map.
Assume that $x,y\in Y$ are distinct points forming an asymptotic pair and take any $u,v\in Z$, $s,t\in X$ such that $\pi(s,u)=x$ and $\pi(t,v)=y$. If $s\neq p$ then there is an open set $V\supset M$
such that $(T\times R)^n(s,u)\not\in V$ for infinitely many $n\geq 0$. Let us fix one such $n$ and any open set $U\supset M$ such that $\overline{U}\subset V$. Take any $\lambda>0$ such that $\dist (X\setminus V,\overline{U})>0$
and $\limsup_{i\to\infty} d(T^i(s),T^i(t))>\lambda$. Since we may assume that $\pi$ outside $U$ does not decrease distances, it is not hard to see that $d(T^n(x),T^n(y))>\lambda/2$ which is impossible.
But then the only remaining possibility is that $s=t=p$ which is also impossible, because $x\neq y$. This shows that distinct points in $Y$ are never asymptotic. But it is also clear that $\{q\}$ is the unique minimal subset in $Y$
which shows that $(Y,S)$ is proximal. Finally, $(Y,S)$ is weakly mixing as a factor of a weakly mixing dynamical system $(X\times Z, T\times R)$. We have just proved Theorem~\ref{main:2} for $n\geq 2$.

Finally, to prove Theorem~\ref{main:2} for $n=1$  it suffices to use Handel's minimal homeomorphism on the pseudo-circle \cite{Handel}, since this homeomorphism is weakly mixing and uniformly rigid \cite{BCO}.


\section{Acknowledgments}
The third named author is grateful to Jian Li, Wen Huang and Takashi Shimomura for many fruitful discussions on topics related to this paper.

J. Boro\'nski's work was supported by National Science Centre, Poland (NCN), grant no. 2015/19/D/ST1/01184. Research of P. Oprocha was supported by National Science Centre, Poland (NCN), grant no. 2015/17/B/ST1/01259.

\end{document}